\newcommand{\no}[1]{#1}
\renewcommand{\no}[1]{}
\renewcommand{\Delta}{\upDelta}}
\date{\today}
\newtheorem{theorem}{Theorem}[section]
\newtheorem{proposition}{Proposition}[section]
\newtheorem{lemma}{Lemma}[section]
\newtheorem{corollary}{Corollary}[section]
\theoremstyle{remark}
\newtheorem{remark}{Remark}[section]
\numberwithin{equation}{section}
\newcommand{\pd}{\partial}
\newcommand{\mydiv}{{\text{div}}}
\newcommand{\R}{{\mathbb R}}
\newcommand{\N}{{\mathbb N}}
\newcommand{\bel}{\begin{equation} \label}
\newcommand{\ee}{\end{equation}}
\newcommand{\bea}{\begin{eqnarray}}
\newcommand{\eea}{\end{eqnarray}}
\newcommand{\beas}{\begin{eqnarray*}}
\newcommand{\eeas}{\end{eqnarray*}}
\title[Isospectral potentials]{Heat trace asymptotics and compactness of isospectral potentials for the Dirichlet Laplacian}
\author[Mourad Choulli]{Mourad Choulli\S}
\address{\S Institut \'Elie Cartan de Lorraine, UMR CNRS 7502, Universit\'e de Lorraine, B.P. 70239, 54506 Vandoeuvre-l\`es-Nancy Cedex, France}
\email{mourad.choulli@univ-lorraine.fr}
\author[Laurent Kayser]{Laurent Kayser\P}
\address{\P Institut \'Elie Cartan de Lorraine, UMR CNRS 7502, Universit\'e de Lorraine, B.P. 70239, 54506 Vandoeuvre-l\`es-Nancy Cedex, France}
\email{laurent.kayser@univ-lorraine.fr}
\author[Yavar Kian]{Yavar Kian\dag}
\address{\dag Centre de Physique Th\'eorique, UMR CNRS 7332, Universit\'e d'Aix-Marseille, Universit\'e du Sud-Toulon-Var, CNRS-Luminy, 13288 Marseille, France}
\email{yavar.kian@univ-amu.fr}
\author[Eric Soccorsi]{Eric Soccorsi\ddag}
\address{\ddag Centre de Physique Th\'eorique, UMR CNRS 7332, Universit\'e d'Aix-Marseille, Universit\'e du Sud-Toulon-Var, CNRS-Luminy, 13288 Marseille, France}
\email{eric.soccorsi@univ-amu.fr}
\date{}
\begin{document}

\begin{abstract}
Let $\Omega$ be a $C^\infty$-smooth bounded domain of $\mathbb{R}^n$, $n \geq 1$, and let the matrix ${\bf a} \in C^\infty (\overline{\Omega};\R^{n^2})$ be symmetric and uniformly elliptic. We consider the $L^2(\Omega)$-realization $A$ of the operator $-\mydiv ( {\bf a} \nabla \cdot)$ with Dirichlet boundary conditions. We perturb $A$ by some real valued potential
$V \in C_0^\infty (\Omega)$ and note $A_V=A+V$. We compute the asymptotic expansion of $\mbox{tr}\left( e^{-t A_V}-e^{-t A}\right)$ as $t \downarrow 0$ for any matrix ${\bf a}$ whose coefficients are homogeneous of degree $0$. In the particular case where $A$ is the Dirichlet Laplacian in $\Omega$, that is when ${\bf a}$ is the identity of $\R^{n^2}$, we make the four main terms appearing in the asymptotic expansion formula explicit and prove that $L^\infty$-bounded sets of isospectral potentials of $A$ are $H^s$-compact for $s <2$.

\medskip
\noindent
{\bf Key words :} Heat trace asymptotics, isospectral potentials.

\medskip
\noindent
{\bf Mathematics subject classification 2010 :} 35C20
\end{abstract}

\maketitle

\tableofcontents


\section{Introduction}
\label{sec-intro}

In the present paper we investigate the compactness issue for isospectral potentials sets of the Dirichlet Laplacian by means of heat kernels asymptotics.

\subsection{Second order strongly elliptic operator} 
Let $\mathbf{a}=(a_{ij})_{1\leq i,j \leq n}$ be a symmetric matrix of $\R^{n^2}$, $n \geq 1$, with coefficients in $C^\infty (\mathbb{R}^n)$. We assume that $\mathbf{a}$ is uniformly elliptic, in the sense that there is a constant $\mu \geq 1$ such that the estimate
\bel{i0}
\mu^{-1} \leq  \mathbf{a}(x) \leq \mu,
\ee
holds for all $x \in \mathbb{R}^n$ in the sense of quadratic forms on $\R^n$.

We consider a bounded domain $\Omega \subset \mathbb{R}^n$, with $C^\infty$ boundary $\pd \Omega$ and introduce the selfadjoint operator $A$ generated in $L^2(\Omega)$ by the closed quadratic form
\bel{i0b}
\mathfrak{a}[u] = \int_{\Omega} a(x) | \nabla u(x) |^2 dx,\ u \in D(\mathfrak{a})=H_0^1(\Omega),
\ee
where $H_0^1(\Omega)$ is the closure of $C_0^\infty(\Omega)$ in the topology of the standard first-order Sobolev space $H^1(\Omega)$. Here $\nabla$ stands for the gradient operator on
$\R^n$. By straightforward computations we find out that $A$ acts on its domain
$D(A)=H^2(\Omega) \cap H_0^1(\Omega)$, as
\bel{i1}
A=-\mydiv(a(x) \nabla \cdot) =  -\sum_{i,j=1}^n \partial_j(a_{ij}(x) \partial_i).
\ee
Let $V \in C_0^\infty (\mathbb{R}^n)$ be real-valued. We define the perturbed operator $A_V=A+V$ as a sum in the sense of quadratic forms. Then we have $D(A_V)=D(A)$ by \cite{RS2}[Theorem X.12]. 

\subsection{Main results}
Put
\bel{i2}
Z_\Omega ^V(t)=\mbox{tr}\left( e^{-t A_V}-e^{-tA}\right),\ t>0.
\ee
Much of the technical work developped in this paper is devoted to proving the existence of real coefficients $c_k(V)$, $k \geq 2$, such that following symptotic expansion
\bel{i3} 
Z_\Omega ^V(t)=t^{-n/2}\left (tc_2(V)+t^{3/2}c_3(V)+\ldots +t^{k/2}c_k(V)+O\left(t^{k/2+1/2}\right) \right),\ t \downarrow 0,
\ee
holds for ${\bf a}$ homogeneous of degree $0$.
In the peculiar case where $\mathbf{a}$ is the identity matrix then \eqref{i3} may be refined, providing
\bel{i4}
Z_\Omega ^V(t)=t^{-n/2}\left (td_1(V)+t^2d_1(V)+\ldots +t^pd_p(V)+O\left(t^{p+1}\right)\right),\  t \downarrow 0,
\ee
where $d_k(V)$, $k \geq 1$, is a real number depending only on $V$.
Moreover we shall see that \eqref{i2}-\eqref{i3} remain valid upon substituting $\R^n$ for $\Omega$ in the definition of $A$ (and subsequently $H^1(\R^n)$ for $H_0^1(\Omega)$ in \eqref{i0b}).

Since $\Omega$ is bounded then the injection $H_0^1(\Omega) \hookrightarrow L^2(\Omega)$ is compact. Thus the resolvent of $A_V$ is a compact operator and the spectrum of
$A_V$ is pure point. Let $\{ \lambda_j^V,\ j \in \N^* \}$ be the non-decreasing sequence of the eigenvalues of $A_V$, repeated according to their multiplicities. We define the isospectral set associated to the potential $V \in C_0^{\infty}(\Omega)$ by
$$
\mbox{Is}(V)=\{ W \in C_0^\infty(\Omega );\ \lambda_k^V=\lambda_k^{W},\ k \in \N^* \}.
$$
The computation carried out in \S \ref{sec-coe} of the coefficients $d_j(V)$ appearing in \eqref{i4}, for $j=1,2,3,4$, leads to the following compactness result.

\begin{theorem}
\label{thm-comp}
Let ${\bf a}$ be the identity of $\R^{n^2}$. Then for all $V \in C_0^{\infty}(\Omega)$ and any bounded subset $\mathcal{B} \subset L^\infty (\Omega )$ such that $V \in \mathcal{B}$, the set $\mbox{Is}(V) \cap \mathcal{B}$ is compact in $H^s(\Omega )$ for each $s \in (-\infty,2)$. 
\end{theorem}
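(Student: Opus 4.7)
The starting point is the spectral invariance of $Z_\Omega^V$: since $Z_\Omega^W(t)=\sum_{j\geq 1}\bigl(e^{-t\lambda_j^W}-e^{-t\lambda_j^0}\bigr)$ depends only on the spectrum of $A_W$, we have $Z_\Omega^W(t)=Z_\Omega^V(t)$ for every $t>0$ and every $W\in\mbox{Is}(V)$. By the uniqueness of the asymptotic expansion \eqref{i4}, this forces $d_k(W)=d_k(V)$ for every $k\geq 1$. The plan is to combine the explicit formulas for the first four coefficients $d_j$ (computed in Section \ref{sec-coe}) with the $L^\infty$-bound coming from $W\in\mathcal{B}$ to produce a uniform $H^2(\Omega)$-bound on every $W\in \mbox{Is}(V)\cap \mathcal{B}$; Rellich's compact embedding theorem then yields the asserted $H^s$-compactness for $s<2$.

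Concretely, each $d_j(V)$ is expected to be an integral over $\Omega$ of a polynomial in $V$ and its derivatives, because in the difference $e^{-tA_V}-e^{-tA}$ all purely geometric boundary contributions cancel. After the natural integrations by parts (allowed since $V\in C_0^\infty(\Omega)$), one anticipates that $d_3(V)$ contains a nonzero multiple of $\int_\Omega|\nabla V|^2\,dx$ together with a cubic term $\int_\Omega V^3\,dx$ and lower-order contributions, while $d_4(V)$ contains a nonzero multiple of $\int_\Omega(\Delta V)^2\,dx$ together with terms of the form $\int_\Omega V^4\,dx$ and $\int_\Omega V|\nabla V|^2\,dx$.

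Given such expressions, the argument proceeds in two steps. First, the identity $d_3(W)=d_3(V)$, combined with the uniform bound on $\int_\Omega W^3\,dx$ supplied by the $L^\infty$-bound on $\mathcal{B}$ and the boundedness of $\Omega$, yields a uniform estimate on $\|\nabla W\|_{L^2(\Omega)}$, hence on $\|W\|_{H^1(\Omega)}$. Second, the identity $d_4(W)=d_4(V)$, together with the $L^\infty$- and $H^1$-controls already obtained (which take care of the $\int_\Omega W^4\,dx$ and $\int_\Omega W|\nabla W|^2\,dx$ terms), yields a uniform bound on $\|\Delta W\|_{L^2(\Omega)}$. Since $W\in H_0^1(\Omega)$ and $\partial\Omega$ is $C^\infty$, elliptic regularity for the Dirichlet Laplacian promotes this to a uniform $\|W\|_{H^2(\Omega)}$-bound. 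Rellich's theorem then allows one to extract from any sequence in $\mbox{Is}(V)\cap\mathcal{B}$ a subsequence converging in $H^s(\Omega)$ for every $s<2$, which gives the announced compactness.

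The main obstacle is ensuring that $d_3$ and $d_4$ really do have the structure above and, in particular, that the coefficients of $\int_\Omega|\nabla V|^2\,dx$ and $\int_\Omega(\Delta V)^2\,dx$ are nonzero with signs compatible with the inversion just performed. This rests entirely on the explicit heat-invariant calculations carried out in Section \ref{sec-coe}; once those are in hand, the extraction of the $H^2$-bound and the concluding compactness step are straightforward.
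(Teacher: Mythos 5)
Your proposal follows the same strategy as the paper: use the isospectrality of the heat-trace coefficients $d_k$ (Corollary \ref{cor-asymptotic}), then combine the $L^\infty$-bound from $\mathcal{B}$ with the explicit polynomial structure of $d_3$ (giving $\int_\Omega|\nabla W|^2$ modulo $\int_\Omega W^3$) and $d_4$ (giving a positive-definite quadratic form in the second derivatives modulo $\int_\Omega W^4$ and cubic terms of the type $\int_\Omega W\,\partial^\gamma W\,\partial^{\gamma'}W$) to obtain a uniform $H^2(\Omega)$-bound, and conclude by Rellich. The only cosmetic difference is that the paper's computation yields the full Hessian integrals $\sum_{|\gamma|=2}\int_\Omega|\partial^\gamma W|^2$ directly, rather than $\int_\Omega(\Delta W)^2$ followed by elliptic regularity, but for compactly supported $W$ these are equivalent after integration by parts, so the argument is essentially identical.
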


\subsection{What is known so far}
It turns out that the famous problem addressed by M. Kac in \cite{Ka}, as whether one can hear the shape of drum, is closely related to the following asymptotic expansion formula for the trace of $e^{t\Delta _g}$ on a compact Riemannian manifold $(M,g)$: 
\bel{i5}
\mbox{tr}\left( e^{t\Delta _g}\right)=t^{-n/2}\left( e_0+te_1+t^2e_2+\ldots +t^ke_k+O\left(t^{k+1}\right)\right).
\ee
Here $\Delta_g$ is the Laplace-Beltrami operator associated to the metric $g$ and the coefficients $e_k$, $k\geq 0$, are Riemannian invariants depending on the curvature tensor and its covariants derivatives. There is a wide mathematical literature about \eqref{i5}, with many authors focusing more specifically on the explicit calculation of $e_k$, $k \geq 0$. 
This is due to the fact that these coefficients actually provide useful information on $g$ and consequently on the geometry of the manifold $M$. The key point in the proof of \eqref{i5}is the construction of a parametrix for the heat equation $\partial _t-\Delta_g$, which was initiated by S. Minakshisudaram and \AA. Pleijel in \cite{MP}.

The asymptotic expansion formula \eqref{i4} was proved by Y. Colin de Verdi\`ere in \cite{Co} by adaptating \eqref{i5}. An alternative proof, based on the Fourier transform, was given in \cite{BB} by R. Ba\~{n}uelos and A. S\'a Barreto. 
The approach developped in this text is rather different in the sense that \eqref{i3} is obtained by linking the heat kernel of $e^{-tA_V}$ to the one of $e^{-tA}$ through Duhamel's principle. The asymptotic expansion formulae \eqref{i4} and \eqref{i5} are nevertheless quite similar, but, here, the coefficients $d_k$, $k \geq 1$, are stated as integrals over $\Omega$ of polynomial functions in $V$ and its derivatives. This situation is reminiscent of \cite{BB}[Theorem 2.1] where the same coefficients are expressed in terms of the tensor products $\widehat{V}\otimes \ldots \otimes \widehat{V}$, where $\widehat{V}$ is the Fourier transform of the potential $V$. Since the present work is not directly related to the analysis of the asymptotic expansion formula \eqref{i5}, we shall not go into that matter further and we refer to \cite{BGM, Ch, Gi2, Ka, MS} for more details.

As will appear in section \ref{sec-proof}, the proof of the compactness Theorem \ref{thm-comp} boils down to the calculation of the four main terms in the asymptotic expansion formula \eqref{i3}. This follows from the basic identity
$$
\sum_{k\geq 1}e^{-\lambda _k^Vt}=\mbox{tr}\left( e^{-tA_V}\right)=\mbox{tr}\left(e^{-tA}\right)+ Z_\Omega ^V(t),
$$
linking the isospectral sets of $A_V$ to the heat trace of $A$. Compactness results for isospectral potentials associated to the operator $\Delta_g+V$ were already obtained by  Br\"uning in \cite{Br}[Theorem 3] for a compact Riemannian manifold with dimension no gretaer that $3$, and further improved by Donnelly in \cite{Don}. Their approach is based on trace asymptotics borrowed to \cite{Gi1}[Theorem 4.3]. Our strategy is rather similar but the heat kernels asymptotics needed in this text are explicitly computed in the first part of the article.

\subsection{Outline}
Section \ref{sec-pre} gathers several definitions and auxiliary results on heat kernels and trace asymptotics neeeded in the remaining part of the article. The asymptotic formulae \eqref{i3}-\eqref{i4} areestablished in Section \ref{sec-af}. Finally section \ref{sec-proof} contains the proof of Theorem \ref{thm-comp}.

\section{Preliminaries}
\label{sec-pre}

In this section we introduce some notations used throughout this text and derive auxiliary results needed in the remaining part of this paper. 

\subsection{Heat kernels and trace asymptotics}
With reference to the definitions and notations introduced in \S \ref{sec-intro} we first recall from \cite{Ou} that the operator $(-A_V)$, where $V \in C_0^{\infty}(\Omega)$, generates an analytic semi-group $e^{-t A_V}$ on $L^2(\Omega )$. We note
$K^V$ the heat kernel associated to $e^{-t A_V}$, in such a way that the identity
\bel{a1}
\left(e^{-t A_V}f\right)(x)=\int_\Omega K^V(t,x,y)f(y)dy,\ t>0,\ x \in \Omega,
\ee
holds for every $f \in L^2(\Omega)$. Let $M_V$ be the multiplier by $V$. Then we have
\[
e^{-t A_V}=e^{-tA}-\int_0^te^{-(t-s)A}M_Ve^{-s A_V}ds,\ t >0,
\]
from Duhamel's formula. From this and \eqref{a1} then follows that
\bel{a2}
K^V(t,x,y)=K(t,x,y)-\int_0^t\int_\Omega K(t-s,x,z)V(z)K^V(s,z,y)dzds,\ t>0,\ x,y \in \Omega,
\ee
where $K$ denotes the heat kernel of $e^{-tA}$. Upon solving the integral equation \eqref{a2} with unknown function $K^V$ by the successive approximation method, we obtain that
\bel{a3}
K^V(t,x,y)=\sum_{j\geq 0}K^V_j(t,x,y),\ t>0,\ x,y \in \Omega,
\end{equation}
with 
\bel{a3b}
K^V_0(t,x,y)  =  K(t,x,y)\ \mbox{and}\ K^V_{j+1}(t,x,y) = -\int_0^t \int_\Omega K(t-s,x,z)V(z)K^V_j(s,z,y)dsdz\ \mbox{for\ all}\ j \in \N.
\ee
Thus, for each $t>0$ and $x,y \in \Omega$, we get by induction on $j \in \N^*$ that
$$
K_j^V(t,x,y) = (-1)^j \int_{\Omega^n} \int _0^t \int_0^{t_1} \ldots \int_0^{t_{j-1}} \left[ \prod_{i=1}^j K(t_{i-1}-t_i,z_{i-1},z_i)V(z_i) \right] K(t_j,z_j,y) dz^j dt^j,
$$
where $t_0=t$, $z_0=x$, and $d u^j= du_1 \ldots d u_j$ for $u=z,t$.
From this, the following reproducing property
\bel{a5}
\int_\Omega K(t-s,x,z)K(s,z,y)dz=K(t,x,y),\ t>0,\ s \in (0,t),\ x,y \in \Omega,
\ee 
and the estimate $K \geq 0$, arising from \cite{Fr}, then follows that
\bel{a4}
|K_j^V(t,x,y)|\leq \frac{\|V\|^j_\infty t^j}{j!}K(t,x,y),\ t>0,\ x,y \in \Omega,\ j \in \N.
\ee
Therefore, for any fixed $x,y \in \Omega$, the series in the rhs of \eqref{a3} converges uniformly in $t >0$.

Having said that we consider the fundamental solution $\Gamma$  to the equation 
$$ \pd_t -\mydiv({\bf a}(x) \nabla\ \cdot ) = \pd_t-\sum_{i,j=1}^n \partial_j (a_{ij}(x) \partial_i\ \cdot)=0\ \mbox{in}\ \mathbb{R}^n. 
$$
Then there is a constant $c>0$, depending only on $n$ and $\mu$, such that we have
\bel{a6}
\Gamma (t,x,y)\leq (ct)^{-n/2}e^{-c|x-y|^2 \slash t},\ t>0,\ x,y \in \mathbb{R}^n,
\ee
according to \cite{FS}. Further, arguing as in the proof of Lemma \ref{lm-heatker} below, it follows from the maximum principle that
\bel{a7}
0 \leq K(t,x,y) \leq \Gamma(t,x,y),\ t>0,\ x , y \in \Omega.
\ee
Thus, for all fixed $t>0$, the series in the rhs of \eqref{a3} converges uniformly with respect to $x$ and $y$ in $\Omega$ according to \eqref{a4} and \eqref{a6}, and we have
\bel{a8}
\int_\Omega K^V(t,x,x)dx =\sum_{j\geq 0} A_j^V(t)\ \mbox{where}\ 
A_j^V(t)=\int_\Omega K_j^V(t,x,x)dx,\ j \in \N.
\ee
Since $\sigma (e^{-t A_V})=\{ e^{-t\lambda^V_k},\; k\geq 1\}$ from the spectral theorem then
$e^{-t A_V}$ is trace class by \cite{Kat}. On the other hand, $e^{-t A_V}$ being an integral operator with smooth kernel (see e.g. \cite{Da}), we have
\bel{a8b}
\mbox{tr}\left(e^{-t A_V}\right)= \int_\Omega K^V(t,x,x)dx = \sum_{k\geq 1}e^{-t \lambda^V_k},\ t>0.
\ee
Notice that the right identity in \eqref{a8b} is a direct consequence of  Mercer's theorem (see e.g. \cite{Ho}), entailing
$$
K^V(t,x,y)=\sum_{k\geq 1} e^{-t \lambda^V_k} \phi_k^V(x) \times \overline{\phi_k^V(y)},\ t >0,\ x , y \in \Omega,
$$
where $\{ \phi _k^V,\ k \in \N^* \}$ is an orthonormal basis of eigenfunctions $\phi_k^V$ of $A_V$, associated to the eigenvalue $\lambda_k^V$.
Finally, putting \eqref{i2} and \eqref {a8}-\eqref{a8b} together, we find out that
\bel{a8c}
Z_\Omega^V(t) = \sum_{j\geq 1} A_j^V(t),\ t>0.
\ee

\subsection{Estimation of Green functions}
We start with the following useful comparison result:
\begin{lemma}
\label{lm-heatker}
For $\delta >0$ put $\Omega _\delta =\{ x\in \Omega ;\ \mathrm{dist}(x,\pd \Omega)>\delta \}$. Then we have
$$
0 \leq \Gamma (t,x,y)-K(t,x,y)\leq (ct)^{-n/2}e^{-c\delta ^2 \slash t},\ 0 < t \leq \frac{2c\delta ^2}{n},\ x\in \Omega,\ y\in \Omega _\delta,
$$
where $c$ is the constant appearing in the rhs of \eqref{a6}.
\end{lemma}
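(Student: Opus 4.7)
The plan is to view $w(s,x) := \Gamma(s,x,y)-K(s,x,y)$, for fixed $y \in \Omega_\delta$, as a classical solution of the homogeneous parabolic equation $\pd_t u - \mydiv(\mathbf{a}\nabla u) = 0$ in the cylinder $(0,\infty) \times \Omega$, and to control it via the weak parabolic maximum principle after estimating its parabolic-boundary values by means of \eqref{a6}.

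First I would observe that the Dirichlet condition on $K$ forces $w = \Gamma \geq 0$ on the lateral boundary $(0,\infty)\times\pd\Omega$, and that the identical leading singularity of $\Gamma$ and $K$ at $(0,y)$ (inherent to the parametrix construction of both kernels) yields a continuous extension of $w$ to $[0,\infty)\times\overline{\Omega}$ with $w(0,\cdot)\equiv 0$. The minimum principle then delivers the nonnegativity statement $w \geq 0$.

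For the upper bound I would introduce $f(s) := (cs)^{-n/2} e^{-c\delta^2/s}$ and check by direct differentiation that $f$ is nondecreasing precisely on $(0, 2c\delta^2/n]$. Fixing $t$ in that interval, the Gaussian estimate \eqref{a6} combined with $|x-y| \geq \delta$ for $x \in \pd\Omega$ and $y \in \Omega_\delta$ gives
\[
w(s,x) = \Gamma(s,x,y) \leq (cs)^{-n/2} e^{-c|x-y|^2/s} \leq f(s) \leq f(t)
\]
for all $(s,x) \in (0,t]\times\pd\Omega$, which together with $w(0,\cdot)\equiv 0$ says that $w - f(t) \leq 0$ on the entire parabolic boundary of $(0,t] \times \Omega$. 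The maximum principle then transports this inequality to the interior, and specializing to $s=t$ delivers the claimed upper estimate.

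The main obstacle, in my view, is the regularity of $w$ at $t=0$, without which the classical parabolic maximum principle does not apply directly. I expect this to follow from the parametrix construction of the heat kernels (cf.\ \cite{Fr}), under which both $\Gamma$ and $K$ decompose as the same explicit Gaussian leading term plus a remainder uniformly controlled as $t \downarrow 0$, so that their difference extends regularly to the initial slice.
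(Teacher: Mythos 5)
Your proposal is correct and follows essentially the same route as the paper: both regard $\Gamma(\cdot,\cdot,y)-K(\cdot,\cdot,y)$ as the solution of the homogeneous Dirichlet initial--boundary value problem in $(0,\infty)\times\Omega$, invoke the parabolic maximum principle, estimate the lateral boundary trace by the Aronson--Nash Gaussian bound \eqref{a6} together with $\mathrm{dist}(y,\pd\Omega)>\delta$, and exploit the monotonicity of $s\mapsto (cs)^{-n/2}e^{-c\delta^2/s}$ on $(0,2c\delta^2/n]$. Your explicit flagging of the $t\downarrow 0$ regularity needed to apply the maximum principle is a point the paper passes over silently, but the underlying argument is the same.
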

\begin{proof}
Fix $y\in \Omega_\delta$. Then $u_y(t,x)=\Gamma (t,x,y)-K(t,x,y)$ being the solution to the following initial boundary value problem
$$
\left\{
\begin{array}{ll}
\partial_t u_y(t,x) -\sum_{i,j=1}^n \partial _j(a_{ij}\partial_i u_y(t,x) )=0, &t>0,\ x \in \Omega,
\\
u_y(0,x )=0, & x\in \Omega ,
\\
u_y(t,x) =\Gamma (t,x,y), & t>0,\ x\in \partial \Omega,
\end{array}
\right.
$$
we get from the parabolic maximum principle (see e.g. \cite{Fr}) that
$u_y(t,x)\leq \max_{\substack{z\in \partial \Omega \\ 0<s\leq t}}\Gamma (s,z,y)$ for ae $x \in \Omega$.
Therefore we have
$$
u_y(t,x)\leq \max_{\substack{z\in \partial \Omega \\ 0<s\leq t}}(cs)^{-n/2}e^{-c|z-y|^2 \slash s}\leq \max_{0<s\leq t}(cs)^{-n/2}e^{-c\delta ^2 \slash s},\ t>0,\ x \in \Omega,
$$
by \eqref{a6}. Now the desired result follows readily from this and \eqref{a7} upon noticing that $s \mapsto (cs)^{-n/2}e^{-c\delta ^2 \slash s}$ is  non-decreasing on $(0,2c\delta ^2 \slash n)$.
\end{proof}

\begin{remark}
\label{rm-heatker}
a) The functions $K(t,\cdot,\cdot)$ and $\Gamma(t,\cdot,\cdot)$ being symmetric for all $t>0$, the statement of Lemma \ref{lm-heatker} remains valid for $x \in \Omega _\delta$ and $y\in \Omega$ as well.\\
b) A result similar to Lemma \ref{lm-heatker} can be found in \cite{Mi} for the Dirichlet Laplacian, which corresponds to the operator $A$ in the peculiar case where ${\bf a}$ is the identity matrix. This claim, which was actually first proved by H. Weyl in \cite{We}, is a cornerstone in the derivation of the classical Weyl's asymptotic formula for the eigenvalues counting function (see e.g. \cite{Dod}).\\
c) We refer to \cite{Co} for an alternative proof of Lemma \ref{lm-heatker} that is based on the classical Feynman-Kac formula (see e.g. \cite{SV}) instead of the maximum principle.
\end{remark}

Let us extend $V \in C_0^\infty(\Omega)$ to $\R^n$ by setting $V(x)=0$ for all $x \in \R^n \setminus \Omega$, and,
with reference to \eqref{a3}-\eqref{a3b}, put 
\bel{a9}
\Gamma _0^V(t,x,y)=\Gamma(t,x,y)\ \mbox{and}\
\Gamma^V_{j+1}(t,x,y)=-\int_0^t\int_{\mathbb{R}^n} \Gamma (t-s,x,z) V(z) \Gamma^V_j(s,z,y)dsdz, j \in \N,
\ee
for all $t>0$ and $x,y \in \R^n$. Armed with Lemma \ref{lm-heatker} we may now relate the asymptotic behavior of $A_j^V(t)$ as $t \downarrow 0$ to the one of 
\bel{a10}
B_j^V(t)=\int_{\Omega} \Gamma^V_j(t,x,x)dx,\ t>0,\ j \in \N.
\ee

\begin{proposition}
\label{pr-asy}
Let $j \in \N^*$. Then for each $k \in \N$ we have $A_j^V(t)=B_j^V(t)+O(t^k)$ as $t\downarrow 0$.
\end{proposition}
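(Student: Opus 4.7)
The plan is to exploit the compact support of $V$: since $V \in C_0^\infty(\Omega)$ we may fix $\delta > 0$ such that $\supp V \subset \Omega_{2\delta}$. In the iterated integrals defining both $K_j^V(t,x,x)$ and $\Gamma_j^V(t,x,x)$, the intermediate variables $z_1,\ldots,z_j$ are forced by the factors $V(z_i)$ to lie in $\Omega_{2\delta}$. The two integrals therefore share the same chain of $j+1$ kernel factors along $x \to z_1 \to \cdots \to z_j \to x$, the only difference being that $K_j^V$ uses $K$'s while $\Gamma_j^V$ uses $\Gamma$'s. In particular, every single kernel factor has at least one argument in $\Omega_{2\delta}$, so Lemma~\ref{lm-heatker} (and the symmetric version mentioned in Remark~\ref{rm-heatker}a)) applies to each of them.

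The next step is a telescoping identity for the product of $j+1$ kernels, which yields
$$K_j^V(t,x,x) - \Gamma_j^V(t,x,x) \;=\; \sum_{m=0}^{j}\, I_m(t,x),$$
where each $I_m(t,x)$ is an integral involving exactly one factor $K_m - \Gamma_m$ and $j$ other kernel factors of the form $K$ (to the left of position $m$) or $\Gamma$ (to the right). By Lemma~\ref{lm-heatker}, for $t$ small enough one has
$$| K_m - \Gamma_m | \;\leq\; (c s_m)^{-n/2}\, e^{-4 c \delta^2 / s_m}, \qquad s_m \in (0, t],$$
where $s_m$ is the time increment attached to the removed kernel, while the surviving kernels are bounded pointwise by $\Gamma$'s via $0 \leq K \leq \Gamma$. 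Extending each $z_i$-integration from $\Omega$ to $\R^n$ (harmless since $V$ vanishes outside $\supp V$), the $j$ surviving $\Gamma$-factors split into two chains that can be collapsed by the reproducing property $\int_{\R^n} \Gamma(\tau,a,z) \Gamma(\tau',z,b)\, dz = \Gamma(\tau + \tau', a, b)$ of the free fundamental solution. Together with the uniform mass bound $\int_{\R^n} \Gamma(\tau, a, z)\, dz \leq M$ (a direct consequence of the Gaussian estimate~\eqref{a6}) and the elementary inequality $e^{-a/s} \leq k!\, s^k / a^k$, this yields for any $k \in \N$ a bound of the form
$$\int_\Omega |I_m(t,x)|\, dx \;\leq\; C_{j,k}\, t^{k + j - n/2}.$$

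Summing over $m = 0, 1, \ldots, j$ and choosing $k$ large enough then proves $A_j^V(t) - B_j^V(t) = O(t^N)$ for any prescribed $N \in \N$. The main obstacle I anticipate is the bookkeeping around the telescoping and the remaining $\Gamma$-chains: verifying that every $(K - \Gamma)$ factor really does inherit an argument in $\Omega_{2\delta}$ (the endpoint positions $m = 0$ and $m = j$ involve $x$ itself, but still have their companion argument in $\supp V \subset \Omega_{2\delta}$), and iterating the semi-group identity correctly over both sub-chains that emerge once the $m$-th link is removed, the two residual $z$-integrations being disposed of by the mass bound.
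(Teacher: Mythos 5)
Your proof is correct and rests on the same pillars as the paper's: the compact support of $V$, the comparison Lemma \ref{lm-heatker} (with Remark \ref{rm-heatker}\,a)) to control a single $\Gamma-K$ factor, and the Gaussian bound \eqref{a6} on $\Gamma$ to absorb the remaining kernel factors; the only difference is organizational, in that you expand $K_j^V-\Gamma_j^V$ into a flat telescoping sum of $j+1$ terms and then collapse the residual $\Gamma$-chains via the semigroup property, whereas the paper carries out the telescoping recursively (two terms at the $j=1$ level, then an induction on $j$ via the integral recursion \eqref{a3b}--\eqref{a9}), using only the mass bound $\int_{\R^n}\Gamma\,dz\le C$ rather than the reproducing property. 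One small point worth making explicit in your write-up: to turn the factor $(c s_m)^{-n/2}e^{-4c\delta^2/s_m}$, which depends on the interior time increment $s_m\in(0,t]$, into a bound depending only on $t$, you should either invoke (as the paper does) the monotonicity of $s\mapsto (cs)^{-n/2}e^{-4c\delta^2/s}$ on $(0,8c\delta^2/n)$, or apply your elementary inequality with exponent $k+\lceil n/2\rceil$ so that the resulting power of $s_m$ is nonnegative and can be majorized by the same power of $t$.
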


\begin{proof}
Choose $\delta >0$ so small that $\mbox{supp}(V)\subset \Omega_\delta$, where $\Omega_\delta$ is the same as in Lemma \ref{lm-heatker}, and pick 
$t \in (0,2c\delta^2 \slash n)$. Then, for all $x,y \in \Omega$, we have 
\beas
|\Gamma _1^V(t,x,y)-K_1^V(t,x,y)|& \leq & \int_0^t \int_{\Omega_\delta} \Gamma (t-s,x,z)|V(z)|[\Gamma (s,z,y)-K(s,z,y)]dzds
\\
& + & \int_0^t\int_{\Omega_\delta}  [\Gamma (t-s,x,z)-K(t-s,x,z)]|V(z)|K(s,z,y)dzds,
\eeas
by \eqref{a3b} and \eqref{a9}. This, together with Lemma \ref{lm-heatker} and part a) in Remark \ref{rm-heatker}, yields 
\bel{a11}
|\Gamma _1^V(t,x,y)-K_1^V(t,x,y)|\leq \|V\|_\infty (ct)^{-n/2}e^{-c\delta ^2 \slash t}\left(\int_0^t\int_{\mathbb{R}^n}\Gamma (s,x,z)dz +\int_0^t\int_{\mathbb{R}^n}\Gamma (s,z,y)dz \right),
\ee
for all $t>0$ and a.e. $x, y \in \Omega$. Here we used the estimae $0 \leq K \leq \Gamma$ and the fact that the function $s \mapsto (cs)^{-n/2}e^{-c\delta ^2 \slash s}$ is non-decreasing on $(-\infty,2c\delta ^2 \slash n]$.
Further, due to \eqref{a6}, there is a positive constant $C$, independent of $t$, such that
$$ \int_0^t\int_{\mathbb{R}^n}\Gamma (s,x,z)dz +\int_0^t\int_{\mathbb{R}^n}\Gamma (s,z,y)dz \leq C t,\ t>0, x,y \in \Omega, $$
so we obtain
$$
|\Gamma _1^V(t,x,y)-K_1^V(t,x,y)|\leq (2C\|V\|_\infty) t (ct)^{-n/2}e^{-c\delta ^2 \slash t},\ t>0,\ x,y \in \Omega,
$$
by \eqref{a11}. Similarly, using \eqref{a4} and arguing as above, we get 
$$
|\Gamma _j^V(t,x,y)-K_j^V(t,x,y)|\leq (2C\|V\|_\infty)^j\frac{t^j}{j!} (ct)^{-n/2}e^{-c\delta ^2 \slash t},\ t >0,\ x,y \in \Omega,
$$
by induction on $j \in \N^*$. Now the result follows from this, \eqref{a8} and \eqref{a10}.
\end{proof}

\subsection{The case of a homogeneous metric with degree $0$}
We now express the function $(t,x) \in \R_+^* \times \R^n \mapsto \Gamma_j^V(t,x,x)$, $j \in \N^*$, defined by \eqref{a9}, in terms of the heat kernel $\Gamma$ and the 
perturbation $V$, in the particular case where ${\bf a}$ is homogeneous of degree $0$. The result is as follows.

\begin{lemma}
\label{lemma2.2}
Assume that $\mathbf{a}$ is homogeneous of degree 0. Then for every $j \in \N^*$, $t>0$ and $x \in \R^n$, we have
\beas
\Gamma_j^V(t,x,x) 
& = & (-1)^j t^{j-n/2}
\int_{(\R^n)^n} \int _0^1\int_0^{s_1} \ldots \int_0^{s_{j-1}} \left[ \prod_{i=1}^j \Gamma (s_{i-1}-s_i,x+w_{i-1},x+w_i) V(x+\sqrt{t} w_i) \right] \\
& & \hspace*{5.5cm} \times \Gamma (s_j,x+w_j,x) ds^j dw^j,
\eeas
with $s_0=1$, $w_0=0$, and $d\beta^j=d\beta_1 \ldots d\beta_j$ for $\beta=s,w$.
\end{lemma}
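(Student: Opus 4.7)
The plan is to expand $\Gamma_j^V$ as an iterated integral, apply a parabolic rescaling, and then exploit the scale invariance of $\Gamma$ induced by the degree-zero homogeneity of $\mathbf{a}$.

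A routine induction on $j$ starting from \eqref{a9}, formally identical to the one carried out for $K_j^V$ earlier in the excerpt but now with $\Omega$ replaced by $\R^n$, gives
\[
\Gamma_j^V(t,x,y) = (-1)^j \int_{(\R^n)^j}\int_0^t\int_0^{t_1}\cdots\int_0^{t_{j-1}}\left[\prod_{i=1}^j \Gamma(t_{i-1}-t_i,z_{i-1},z_i)V(z_i)\right]\Gamma(t_j,z_j,y)\,dt^j\,dz^j,
\]
with $t_0=t$ and $z_0=x$. Setting $y=x$ and performing the parabolic substitution $t_i = t s_i$, $z_i = x + \sqrt{t}\,w_i$ for $i=1,\dots,j$ introduces Jacobians $t^j$ and $t^{nj/2}$, sends the time simplex $\{0<t_j<\cdots<t_1<t\}$ onto its unit counterpart $\{0<s_j<\cdots<s_1<1\}$, and already produces the required factors $V(x+\sqrt{t}\,w_i)$.

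The decisive ingredient is the scaling identity
\[
\Gamma(\lambda^2\tau,\lambda\xi,\lambda\eta)=\lambda^{-n}\,\Gamma(\tau,\xi,\eta),\quad\lambda>0,
\]
which is a consequence of $\mathbf{a}(\lambda y)=\mathbf{a}(y)$. I would prove it by verifying via the chain rule that the map $(\tau,\xi)\mapsto\Gamma(\lambda^2\tau,\lambda\xi,\lambda\eta)$ satisfies the same parabolic equation as $\Gamma(\cdot,\cdot,\eta)$ (the degree-zero homogeneity absorbs the dilation of the coefficient matrix while the parabolic scaling $\tau\mapsto\lambda^2\tau$ balances the two factors of $\lambda$ generated by the two $\nabla$'s), then compute its initial mass to identify the normalization $\lambda^{-n}$, and finally appeal to uniqueness of the fundamental solution. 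Applying this identity with $\lambda=\sqrt{t}$ to each of the $j+1$ kernels in the integrand strips the factor $t$ from every time argument at the cost of a factor $t^{-n/2}$, and the accumulated powers of $t$ collapse to $t^j\cdot t^{nj/2}\cdot t^{-n(j+1)/2}=t^{j-n/2}$, recovering the prefactor of the lemma.

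I expect the main obstacle to be a rigorous justification of the scaling identity, because degree-zero homogeneity only forces $\mathbf{a}$ to be smooth on $\R^n\setminus\{0\}$ and thus possibly singular at the origin; uniqueness of the fundamental solution must then be extracted from the divergence-form parabolic theory combined with the Gaussian upper bound \eqref{a6}. Once the scaling identity is in hand, the remainder of the argument is routine bookkeeping of Jacobians and rescaled integration limits.
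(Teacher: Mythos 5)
Your overall strategy matches the paper's (iterated integral, parabolic rescaling, exploit the dilation invariance induced by degree-zero homogeneity of $\mathbf{a}$), and your accounting of the powers of $t$ is exactly right. But there is a concrete gap at the final step. After you substitute $z_i=x+\sqrt{t}\,w_i$ and apply the scaling identity $\Gamma(\lambda^2\tau,\lambda\xi,\lambda\eta)=\lambda^{-n}\Gamma(\tau,\xi,\eta)$ with $\lambda=\sqrt{t}$, the $i$-th kernel is
\[
\Gamma\bigl(t(s_{i-1}-s_i),\,x+\sqrt{t}\,w_{i-1},\,x+\sqrt{t}\,w_i\bigr)
=t^{-n/2}\,\Gamma\Bigl(s_{i-1}-s_i,\ \tfrac{x}{\sqrt{t}}+w_{i-1},\ \tfrac{x}{\sqrt{t}}+w_i\Bigr),
\]
because $x+\sqrt{t}\,w_{i-1}=\sqrt{t}\bigl(\tfrac{x}{\sqrt{t}}+w_{i-1}\bigr)$. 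The spatial arguments come out as $\tfrac{x}{\sqrt{t}}+w_i$, \emph{not} $x+w_i$ as required by the lemma. Your write-up says the scaling ``strips the factor $t$ from every time argument'' as if the spatial arguments are untouched, but the scaling identity rescales them too, which is precisely where the residual $x/\sqrt{t}$ comes from. The lemma does not follow from the scaling identity alone.

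The paper supplies the missing ingredient as a separate chain of identities combining translation of the coefficient matrix with degree-zero homogeneity (the last display in its proof), namely
$
\Gamma_{\mathbf a}\bigl(\tau,\tfrac{x}{\sqrt{t}}+z,\tfrac{x}{\sqrt{t}}+w\bigr)
=\Gamma_{\mathbf a(\cdot+x/\sqrt{t})}(\tau,z,w)
=\Gamma_{\mathbf a(\sqrt{t}\,\cdot+x)}(\tau,z,w)
=\dots
=\Gamma_{\mathbf a}(\tau,z+x,w+x),
$
where the first equality is translation of the coefficients, the second is homogeneity of $\mathbf a$, and so on. You would need to formulate and verify this step explicitly, and that, rather than the scaling identity (which is indeed immediate from uniqueness of the fundamental solution plus the chain-rule computation you describe), is the delicate part: you should check carefully whether the composition of translation and dilation really forces $\Gamma_{\mathbf a}(\tau,a+z,a+w)$ to be independent of the base point $a$ under the stated hypotheses, since degree-zero homogeneity does not by itself give translation invariance of $\mathbf a$.

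A secondary remark: you perform the time and space substitutions simultaneously, whereas the paper first rescales time, applies the scaling identity, and only then translates and rescales the spatial variables. Both bookkeepings give the same intermediate expression, so this is cosmetic; the substantive difference is the omitted translation-homogeneity step.
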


\begin{proof}
The main benefit of dealing with a homogeneous function $\mathbf{a}$ of degree $0$ is the following property:
$$
\Gamma (ts,x,y)=t^{-n/2}\Gamma \left( s,\frac{x}{\sqrt{t}},\frac{y}{\sqrt{t}} \right),\ t, s >0,\ x,y \in \R^n.
$$
From this and the following the identity arising from \eqref{a9} for all $t>0$ and $x,y \in \R^n$,
\beas
\Gamma_j^V(t,x,y) & = & (-1)^jt^j\int_{(\mathbb{R}^n)^2} \int_0^1\int_0^{s_1}\ldots \int_0^{s_{j-1}} \left[ \prod_{i=1}^j\Gamma (t(s_{i-1}-s_i),z_{i-1},z_i)  V(z_i) \right] \Gamma (ts_j,z_j,y) dz^j ds^j,
\eeas
with $z_0=x$,  then follows that
\beas
& & \Gamma_j^V(t,x,y) \\
& = & (-1)^j t^{j-(j+1) n \slash 2} \int_{(\mathbb{R}^n)^n} \int _0^1\int_0^{s_1}\ldots \int_0^{s_{j-1}} \left[ \prod_{i=1}^j \Gamma \left(s_{i-1}-s_i,\frac{z_{i-1}}{\sqrt{t}},\frac{z_i}{\sqrt{t}} \right) V(z_i) \right] \Gamma \left(s_j,\frac{z_j}{\sqrt{t}},\frac{y}{\sqrt{t}} \right) 
dz^j ds^j.
\eeas
Thus, by performing the change of variables $(z_1,\ldots z_j)=\sqrt{t}(w_1,\ldots w_j)+ (x,\ldots ,x)$ in the above integral, we find out that
\beas
\Gamma_j^V(t,x,y)
& = & (-1)^j t^{j-n \slash 2}
\int_{(\R^n)^n} \int _0^1 \int_0^{s_1}\ldots \int_0^{s_{j-1}} 
\left[ \prod_{i=1}^j \Gamma \left(s_{i-1}-s_i,\frac{x}{\sqrt{t}}+w_{i-1},\frac{x}{\sqrt{t}}+w_i \right)V \left(x+\sqrt{t}w_i \right) \right] \nonumber \\
& & \hspace*{5.5cm} \times \Gamma \left(s_j,\frac{x}{\sqrt{t}}+w_j,\frac{y}{\sqrt{t}}\right) dw^j ds^j. 
\eeas
Finally, we obtain the desired result upon taking $y=x$ in the above identity and recalling that $\Gamma=\Gamma_{\mathbf{a}}$ verifies
$$
\Gamma_{\mathbf{a}} \left( t,\frac{x}{\sqrt{t}}+z,\frac{x}{\sqrt{t}}+w \right)=\Gamma_{\mathbf{a}\left(\cdot\, -\frac{x}{\sqrt{t}}\right)}(t,z,w)
= \Gamma_{\mathbf{a}(\sqrt{t}\, \cdot\, -x)}(t,z,w)
=\Gamma_{\mathbf{a}(\sqrt{t}\, \cdot\, )}(t,z+x,w+x)
=\Gamma_{\mathbf{a}}(t,z+x,w+x),
$$
for all $t>0$ and $x,z,w$ in $\R^n$.
\end{proof}

If $\mathbf{a}$ is the identity matrix $\mathbf{I}$, then $\Gamma(t,x,y)$ is explicitly known and coincides with the following Gaussian kernel
\bel{a13}
G(t,x-y)=(4\pi t)^{-n/2}e^{-|x-y|^2 \slash (4t)},\ t>0,\ x,y \in \R^n.
\ee
This and Lemma \ref{lemma2.2} entails the following:

\begin{lemma}
\label{lm-id}
Assume that $\mathbf{a}=\mathbf{I}$. Then, using the same notations as in Lemma \ref{lemma2.2}, we have
\beas
\Gamma_j^V(t,x,x)&=& (-1)^j t^{j -n \slash 2}
\int_{(\R^n)^n} \int_0^1\int_0^{s_1}\ldots \int_0^{s_{j-1}} \left[ \prod_{i=1}^j G(s_{i-1}-s_i,w_{i-1}-w_i)V(x+\sqrt{t}w_i) \right] \\
& & \hspace*{5.6cm} \times G (s_j,w_j)dw^jds^j,
\eeas
for all $t>0$ and $x \in \R^n$, where $G$ is defined by \eqref{a13}.
\end{lemma}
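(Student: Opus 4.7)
The plan is to apply Lemma \ref{lemma2.2} directly, since the identity matrix is trivially homogeneous of degree $0$, and then specialize using the explicit Gaussian form of $\Gamma$.

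First, I would record the translation invariance of the fundamental solution when $\mathbf{a}=\mathbf{I}$: from \eqref{a13} one has $\Gamma(t,u,v)=G(t,u-v)$ for all $t>0$ and $u,v\in\R^n$, so that for any fixed $x\in\R^n$,
$$
\Gamma(s_{i-1}-s_i,x+w_{i-1},x+w_i)=G(s_{i-1}-s_i,w_{i-1}-w_i),\qquad \Gamma(s_j,x+w_j,x)=G(s_j,w_j).
$$
These two identities absorb the $x$-dependence coming from the kernel factors in the formula of Lemma \ref{lemma2.2}, leaving $x$ only inside the potential arguments $V(x+\sqrt{t}w_i)$.

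The substitution step is then mechanical: plugging the two displayed equalities into the expression furnished by Lemma \ref{lemma2.2} produces exactly the claimed formula, with the overall prefactor $(-1)^j t^{j-n/2}$ and the same iterated integration domain and measure $dw^j\,ds^j$ unchanged. There is no genuine obstacle here; the only point to check is bookkeeping of the boundary conventions $s_0=1$ and $w_0=0$, which match those already fixed in Lemma \ref{lemma2.2}. This completes the proof, so the lemma is really a direct corollary of Lemma \ref{lemma2.2} together with the translation invariance of the Gaussian heat kernel.
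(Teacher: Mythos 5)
Your proof is correct and is essentially the argument the paper intends: the paper states Lemma \ref{lm-id} as an immediate consequence of Lemma \ref{lemma2.2} and the Gaussian form \eqref{a13}, and your observation that $\Gamma(t,u,v)=G(t,u-v)$ makes the translation invariance, hence the cancellation of the shifts by $x$, explicit. Nothing further is needed.
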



\section{Asymptotic expansion formulae}
\label{sec-af}

In this section we establish the asymptotic expansion formulae \eqref{i3}-\eqref{i4}. The strategy of the proof is, first, to establish \eqref{i3}-\eqref{i4} where 
\bel{e0}
Z^V(t) = \mathrm{tr}(e^{-t H_V}-e^{-t H}),\ t>0,
\ee
is substituted for $Z_\Omega^V(t)$, and, second, to relate the asymptotics of $Z_\Omega^V(t)$ as $t \downarrow 0$ to the one of $Z^V(t)$.

Here $H$ is the selfadjoint operator generated in $L^2(\R^n)$ by the closed quadratic form
$$ \mathfrak{h}[u]=\int_{\R^n} a(x) | \nabla u(x) |^2 dx,\ u \in D(\mathfrak{h})=H^1(\R^n), $$
and $H_V=H+V$ as a sum in the sense of quadratic forms. It is easy to check that $H$ acts on its domain $D(H)=H^2(\R^n)$, the second-order Sobolev space on $\R^n$, as the rhs of \eqref{i1}. Moreover we have $D(H_V)=D(H)$ since $V \in L^{\infty}(\R^n)$. In other words $H$ (resp., $H_V$) may be seen as the extension of the operator $A$ (resp., $A_V$) acting in 
$L^2(\R^n)$, and, due to \eqref{a9} and \eqref{e0}, we have
\bel{e0b}
Z^V(t)=\sum_{j\geq 1} H_j^V(t),\ t>0,\ \mbox{where}\ H_j(t)=\int_{\mathbb{R}^n}\Gamma_j^V(t,x,x)dx,\ j \in \N.
\ee
In light of this and Lemma \ref{lemma2.2}, we apply Taylor's formula to $V\in C_0^\infty (\Omega )$, getting for all $j \geq 1$ and $p \geq 1$,
\bel{e1}
\prod_{k=1}^j V(x+tw_k)=\sum_{\ell =0}^{p-1}t^\ell \left[ \sum_{|\alpha _1|+\ldots |\alpha _j|=\ell}\frac{1}{\alpha _1!\ldots \alpha _j!}\prod_{k=1}^j\partial ^{\alpha _k}V(x)w_k^{\alpha _k}\right]+t^pR_j^p (t,x,w_1\ldots ,w_j),
\ee
where
\bel{e2}
R_j^p (t,x,w_1\ldots ,w_j)=\sum_{|\alpha _1|+\ldots |\alpha _j|=p}\frac{p}{\alpha _1!\ldots \alpha _j!}\int_0^1(1-s)^{p-1}\prod_{k=1}^j\partial ^{\alpha _k}V(x+stw_k)w_k^{\alpha _k}ds.
\ee
For the sake of notational simplicity we note 
\bel{e2b}
\alpha ^j=(\alpha _1^j,\ldots, \alpha _j^j)\in (\mathbb{N}^n)^j,\ \alpha ^j!=\prod_{k=1}^j \alpha_k^j!\ \mbox{and}\ W_j ^{\alpha ^j}= \prod_{k=1}^j w_k^{\alpha_k^j}, 
\ee
so that \eqref{e1}-\eqref{e2} may be reformulated as
\bel{e3}
\prod_{k=1}^j V(x+tw_k)=\sum_{\ell =0}^{p-1}t^\ell \left[ \sum_{|\alpha ^j|=\ell}\frac{W_j ^{\alpha ^j}}{\alpha ^j!}\prod_{k=1}^j\partial ^{\alpha _k^j}V(x)\right]+t^pR_j^p (t,x,w_1\ldots ,w_j),
\ee
with
\bel{e4}
R_j^p (t,x,w_1\ldots ,w_j)=\sum_{ |\alpha ^j|=p}\frac{pW_j ^{\alpha ^j}}{ \alpha ^j!}\int_0^1(1-s)^{p-1}\prod_{k=1}^j\partial ^{\alpha _k}V(x+stw_k)ds,\ j,p \in \N^*.
\ee
Next, with reference to \eqref{e2b} we define for further use 
\bel{e5}
c_{\alpha ^j}(x)=
\frac{1}{\alpha ^j!}\int_{(\mathbb{R}^n)^n} \int _0^1\int_0^{s_1}\ldots \int_0^{s_{j-1}} W_j ^{\alpha ^j} \left[ \prod_{i=1}^j \Gamma (s_{i-1}-s_i,x+w_{i-1},x+w_i) \right] 
\Gamma (s_j,x+w_j,x) dw^j ds^j,
\ee
where, as usual, $s_0=1$, $w_0=0$, and $d u^j$ stands for $d u_1 \ldots d u_j$ with $u=s, w$, and we put
\bel{e6}
\mathscr{P}_{\alpha ^j}(V)=\int_\Omega c_{\alpha ^j}(x)\prod_{k=1}^j\partial ^{\alpha _k^j}V(x)dx,\ j \in \N^*.
\ee
We now state the main result of this section.
\begin{proposition}
\label{pr-asymptotic}
Let $p \in \N \setminus \{ 0, 1, 2 \}$. Then, under the assumption \eqref{i0}, $Z^V(t)$ and $Z_\Omega^V(t)$ have the following asymptotic expansion
$$ \sum_{\ell =2}^{p-1}t^{\ell /2}\mathscr{P}_{\ell }(V) +O(t^{p/2})\ \mbox{as}\ t \downarrow 0, $$
where
$$
\mathscr{P}_\ell (V)=\sum_{1\leq j\leq \ell /2}(-1)^j \sum_{|\alpha ^j|=\ell-2j}\mathscr{P}_{\alpha ^j}(V),
$$
the coefficients $\mathscr{P}_{\alpha ^j}(V)$ being given by \eqref{e5}-\eqref{e6}.
\end{proposition}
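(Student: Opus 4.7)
The plan is to first establish the expansion for $Z^V(t) = \sum_{j\geq 1} H_j^V(t)$ via \eqref{e0b}, and then transfer it to $Z_\Omega^V(t)$ using Proposition~\ref{pr-asy}. That proposition already gives $A_j^V(t) = B_j^V(t) + O(t^k)$ for every $k \in \N$; what remains is to show $B_j^V(t) - H_j^V(t) = O(t^k)$ for every $k$. By Lemma~\ref{lemma2.2}, if $x \in \R^n \setminus \Omega$ then the factor $V(x+\sqrt{t} w_i)$ in the integrand forces $|w_i| \geq \mathrm{dist}(x,\Omega)/\sqrt{t}$, and the Gaussian upper bound \eqref{a6} on each $\Gamma$ then produces a decay like $\exp(-c\,\mathrm{dist}(x,\Omega)^2/t)$; integrating over $\R^n \setminus \Omega$ is $O(t^k)$ for any $k$, so $Z_\Omega^V$ and $Z^V$ share their expansions to arbitrary order.

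Next I would insert the Taylor identity \eqref{e3}--\eqref{e4} applied to each $V(x+\sqrt{t} w_i)$ into the formula of Lemma~\ref{lemma2.2} for $\Gamma_j^V(t,x,x)$. After recognising in the coefficient of $t^{\ell/2}$ the factor $\alpha^j!\, c_{\alpha^j}(x)$ from \eqref{e5} and integrating over $x$, the polynomial part contributes to $H_j^V(t)$ the amount $(-1)^j t^{j+\ell/2-n/2}\sum_{|\alpha^j|=\ell}\mathscr{P}_{\alpha^j}(V)$. Grouping the double sum over $j\geq 1$ and $\ell\geq 0$ by the common power $m := 2j+\ell$ recovers precisely $\mathscr{P}_m(V)$ as the coefficient of $t^{m/2-n/2}$; the constraint $j\geq 1$, $\ell\geq 0$ enforces $m\geq 2$, which accounts for the lower summation limit in the statement.

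Two remainder estimates close the argument. The Taylor tail $t^{p/2} R_j^p$ from \eqref{e4}, plugged back into the formula of Lemma~\ref{lemma2.2} and bounded via $\|\partial^\alpha V\|_\infty<\infty$, gives rise to finite moment integrals $\int |W_j^{\alpha^j}|\prod \Gamma \cdot \Gamma\, dw^j\, ds^j$ that are uniformly bounded in $x$ by the Gaussian estimate \eqref{a6}; this contributes at most $O(t^{j+p/2-n/2})$ per $H_j^V(t)$. The series in $j$ itself is truncated using an induction analogous to \eqref{a4}, yielding $|H_j^V(t)| \leq C^j \|V\|_\infty^j t^{j-n/2}/j!$, so the tail $\sum_{j>N} H_j^V(t)$ is $O(t^{N+1-n/2})$ and is absorbed into the $O(t^{p/2-n/2})$ remainder once $N$ is large. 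The main technical obstacle is the uniform control of those Gaussian moment integrals over $(\R^n)^j$ crossed with the time simplex: this is where uniform ellipticity \eqref{i0} of $\mathbf{a}$ together with the scaling identity from the proof of Lemma~\ref{lemma2.2} are essential, the remainder being routine bookkeeping.
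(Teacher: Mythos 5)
Your proposal follows essentially the same route as the paper's proof: Taylor-expand the $V(x+\sqrt t\,w_i)$ factors in the formula of Lemma~\ref{lemma2.2}, recognise the coefficients $c_{\alpha^j}(x)$ of \eqref{e5}, integrate in $x$ to produce $\mathscr{P}_{\alpha^j}(V)$, and collect by the common power $m=2j+\ell$ to obtain $\mathscr{P}_m(V)$; the constraint $j\ge1$, $\ell\ge0$ forces $m\ge2$, exactly as in the paper.

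Two points of divergence from the paper, neither fatal. First, you pass from $A_j^V$ to $H_j^V$ in two hops, $A_j^V=B_j^V+O(t^k)$ and then $B_j^V-H_j^V=O(t^k)$, estimating $\int_{\R^n\setminus\Omega}\Gamma_j^V(t,x,x)\,dx$ by Gaussian decay away from $\operatorname{supp}V$. The paper instead runs the Taylor argument once more for $H_j^V$, observing that the polynomial terms contribute $\int_{\R^n}c_{\alpha^j}\prod\partial^{\alpha_k^j}V\,dx=\mathscr{P}_{\alpha^j}(V)$ since $V$ vanishes outside $\Omega$. Both work; your route is a clean alternative, though note that the relevant distance is $\mathrm{dist}(x,\operatorname{supp}V)\ge\delta>0$ rather than $\mathrm{dist}(x,\Omega)$, which degenerates at $\partial\Omega$. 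Second, your tail bound $|H_j^V(t)|\le C^j\|V\|_\infty^j t^{j-n/2}/j!$ does \emph{not} follow by a purely pointwise induction analogous to \eqref{a4}: the bound $|\Gamma_j^V(t,x,x)|\le\frac{\|V\|_\infty^jt^j}{j!}\Gamma(t,x,x)$ is uniform in $x$ and $\Gamma(t,x,x)\asymp t^{-n/2}$ is not integrable over $\R^n$. You must use the compact support of $V$ once, e.g.\ one application of Duhamel and the reproducing property to localise the $z$-integration to $\operatorname{supp}V$ before invoking the $K_j$-type bound; then $|H_j^V(t)|\lesssim\frac{\|V\|_\infty^j t^j}{j!}\,|\operatorname{supp}V|\,(ct)^{-n/2}$ follows, which is the bound you want. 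The paper glosses over this same step (passing from a sum over $1\le j\le(p-1)/2$ to the full series), so this is a shared implicit detail rather than a defect unique to your argument. With that caveat filled in, the proof is complete and matches the paper's in substance.
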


\begin{proof}
In view of \eqref{a10} and Lemma \ref{lemma2.2}, we have
$$
t^nB_j ^V(t^2)= (-1)^j \sum_{\ell =0}^{p-1}t^{\ell +2j}  \sum_{|\alpha ^j|=\ell}\mathscr{P}_{\alpha ^j}(V) +O(t^{p+2j}),\ t>0,\ j \in \N^*,
$$
hence
$$
t^nB_j ^V(t^2)= (-1)^j \sum_{\ell =2j}^{p-1} t^{\ell }  \sum_{|\alpha ^j|=\ell-2j}\mathscr{P}_{\alpha ^j}(V) +O(t^p),\ t>0,\ j \in \N^*.
$$
Summing up the above identity over all integers $j$ between $1$ and $(p-1) \slash 2$,
we find that
\beas
t^n \sum_{1 \leq j \leq (p-1) \slash 2} B_j ^V(t^2) &= & \sum_{1 \leq j \leq (p-1) \slash 2} (-1)^j \sum_{\ell =2j}^{p-1}t^{\ell }  \sum_{|\alpha ^j|=\ell-2j}\mathscr{P}_{\alpha ^j}(V) +O(t^p) \nonumber \\
& = & \sum_{\ell=2}^{p-1} t^{\ell } \sum_{1 \leq j \leq  \ell \slash 2} (-1)^j \sum_{|\alpha ^j|=\ell-2j}\mathscr{P}_{\alpha ^j}(V) +O(t^p).
\eeas
As a consequence we have $t^n \sum_{1 \leq j \leq (p-1) \slash 2} B_j ^V(t^2) = \sum_{\ell =2}^{p-1} t^{\ell } \mathscr{P}_\ell (V) +O(t^p)$, hence
\bel{e9}
t^n \sum_{j \geq 1} B_j ^V(t^2) = \sum_{\ell =2}^{p-1} t^{\ell } \mathscr{P}_\ell (V) +O(t^p).
\ee
Next, bearing in mind that $V$ is supported in $\Omega$, we see that $\mathscr{P}_{\alpha ^j}(V)=\int_{\R^n} c_{\alpha ^j}(x)\prod_{k=1}^j\partial ^{\alpha _k^j}V(x)dx$ for all $j \in \N^*$.
This entails
\bel{e10}
t^n \sum_{j \geq 1} H_j ^V(t^2) = \sum_{\ell =2}^{p-1} t^{\ell } \mathscr{P}_\ell (V) +O(t^p),
\ee
upon substituting \eqref{e0b} (resp., $H_j^V$) for \eqref{a10} (resp., $B_j^V$) in the above reasonning. Finally, putting \eqref{a8c}, \eqref{e9} and Proposition \ref{pr-asy}
(resp., \eqref{e0b} and \eqref{e10}) together we obtain the result for $Z_\Omega^V$ (resp., $Z^V$).
\end{proof}
Proposition \ref{pr-asymptotic} immediately entails the:
\begin{corollary}
\label{cor-asymptotic}
Let  $V_0\in C_0^\infty (\Omega )$. Then, under the conditions of Proposition \ref{pr-asymptotic}, each $V\in \mbox{Is}(V_0)$ verifies
$$\mathscr{P}_{\ell }(V)=\mathscr{P}_{\ell }(V_0),\ \ell \geq 2. $$
\end{corollary}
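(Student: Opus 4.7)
The approach is to exploit two facts in sequence: first, that the isospectral hypothesis rigidifies the spectral function $Z_\Omega^V(t)$; second, that the asymptotic expansion supplied by Proposition \ref{pr-asymptotic} has uniquely determined coefficients. The plan is that essentially all of the work has already been done in Proposition \ref{pr-asymptotic}, so what remains is a short extraction argument.

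First I would use \eqref{a8b} to rewrite
\[
Z_\Omega^V(t) = \sum_{k \geq 1} e^{-t \lambda_k^V} - \sum_{k \geq 1} e^{-t \lambda_k},
\]
where $\lambda_k = \lambda_k^0$ denotes the $k$th eigenvalue of $A$. For any $V \in \mbox{Is}(V_0)$ one has $\lambda_k^V = \lambda_k^{V_0}$ for every $k \in \N^*$, and therefore
\[
Z_\Omega^V(t) = Z_\Omega^{V_0}(t),\ t > 0.
\]

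Next I would fix $p \in \N \setminus \{0,1,2\}$ and invoke Proposition \ref{pr-asymptotic} for both $V$ and $V_0$. Subtracting the two asymptotic expansions and using the identity above yields
\[
\sum_{\ell = 2}^{p-1} t^{\ell/2} \bigl( \mathscr{P}_\ell(V) - \mathscr{P}_\ell(V_0) \bigr) = O(t^{p/2}),\ t \downarrow 0.
\]
Now I would read off the coefficients inductively: dividing by $t$ and letting $t \downarrow 0$ gives $\mathscr{P}_2(V) = \mathscr{P}_2(V_0)$; subtracting that leading term, dividing by $t^{3/2}$, and letting $t \downarrow 0$ gives $\mathscr{P}_3(V) = \mathscr{P}_3(V_0)$; and so on up to $\ell = p-1$. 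Since $p$ can be taken arbitrarily large, the equality $\mathscr{P}_\ell(V) = \mathscr{P}_\ell(V_0)$ holds for every $\ell \geq 2$.

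There is no real obstacle here: the argument is just the uniqueness of coefficients in a Poincar\'e asymptotic expansion in powers of $t^{1/2}$, combined with the trivial observation that isospectrality of $A_V$ and $A_{V_0}$ makes the heat-trace differences $Z_\Omega^V$ and $Z_\Omega^{V_0}$ coincide identically. The entire depth of the corollary is packaged into Proposition \ref{pr-asymptotic}.
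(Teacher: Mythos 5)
Your argument is correct and is exactly the one the paper has in mind: the paper simply states that Proposition \ref{pr-asymptotic} ``immediately entails'' the corollary, leaving implicit the two observations you spell out, namely that isospectrality forces $Z_\Omega^V \equiv Z_\Omega^{V_0}$ via \eqref{a8b}, and that the coefficients of a Poincar\'e asymptotic expansion in powers of $t^{1/2}$ are uniquely determined. Nothing further to add.
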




In the particular case where $\mathbf{a}=\mathbf{I}$, \eqref{e5} may be rewritten as
\bel{e11}
c_{\alpha ^j}(x)=c_{\alpha ^j}=
\frac{1}{\alpha ^j!}\int_{(\R^n)^n} \int _0^1 \int_0^{s_1} \ldots \int_0^{s_{j-1}} W_j^{\alpha^j} G(1-s_1,w_1) \prod_{k=1}^j G(s_k-s_{k+1},w_k-w_{k+1}) dw^j ds^j,
\ee
with $s_{j+1}=w_{j+1}=0$. Here $G$ is defined by \eqref{e2b} and the notations $\alpha^j!$ and $W_j^{\alpha^j}$ are the same as in \eqref{e2b}.
Thus we have
\bel{e12}
\mathscr{P}_{\alpha^j}(V)=c_{\alpha ^j} P_{\alpha^j}(V),\ P_{\alpha^j}(V)=\int_\Omega \prod_{k=1}^j\partial ^{\alpha _k^j}V(x)dx,
\ee
from \eqref{e6}, hence Proposition \ref{pr-asymptotic} entails the:
\begin{proposition}
\label{pr-asymptotic2}
Assume that $\mathbf{a}=\mathbf{I}$. Then, for any  $p \in \N^*$, the asymptotics of $Z^V(t)$ and $Z_\Omega(t)$ as $t \downarrow 0$ have the expression
$$ \sum_{\ell =1}^{p}t^{\ell }\mathcal{P}_{2\ell }(V) +O(t^{p+1}), $$
where
\bel{e12b}
\mathcal{P}_\ell (V)=\sum_{1\leq j\leq \ell /2}(-1)^j \sum_{|\alpha ^j|=\ell-2j} \mathscr{P}_{\alpha_ j}(V)=\sum_{1\leq j\leq \ell /2}(-1)^j \sum_{|\alpha ^j|=\ell-2j} c_{\alpha ^j} P_{\alpha^j}(V),
\ee
the coefficients $\mathscr{P}_{\alpha_ j}(V)$, $c_{\alpha ^j}$ and $P_{\alpha^j}(V)$ being defined by \eqref{e11}-\eqref{e12}.
\end{proposition}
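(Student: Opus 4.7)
The plan is to reduce Proposition \ref{pr-asymptotic2} to Proposition \ref{pr-asymptotic} by observing that, when $\mathbf{a}=\mathbf{I}$, a parity argument forces the odd-power coefficients of the expansion to vanish, which simultaneously improves the order of the remainder and allows the re-indexing $\ell=2m$. First I would apply Proposition \ref{pr-asymptotic} with $2p+2$ in place of $p$, which is admissible since $2p+2\geq 4$ whenever $p\in\N^*$, to obtain
$$ Z^V(t) = \sum_{\ell=2}^{2p+1} t^{\ell/2}\,\mathscr{P}_\ell(V) + O(t^{p+1}), \qquad t \downarrow 0, $$
together with the analogous expansion for $Z_\Omega^V(t)$.

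Next I would specialize the coefficients \eqref{e5} to $\mathbf{a}=\mathbf{I}$. Because $\Gamma(s,x,y)=G(s,x-y)$ in that case, every factor $\Gamma(s_{i-1}-s_i,x+w_{i-1},x+w_i)$ collapses to $G(s_{i-1}-s_i,w_{i-1}-w_i)$ and $\Gamma(s_j,x+w_j,x)$ to $G(s_j,w_j)$, so $c_{\alpha^j}(x)$ is $x$-independent and coincides with the expression \eqref{e11}; this is precisely the factorization $\mathscr{P}_{\alpha^j}(V)=c_{\alpha^j}P_{\alpha^j}(V)$ of \eqref{e12}. The crucial point is then the parity property: $c_{\alpha^j}=0$ whenever $|\alpha^j|$ is odd. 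Indeed, under the change of variables $w_k\mapsto -w_k$ for every $k=1,\dots,j$, each Gaussian factor $G(1-s_1,w_1)$ and $G(s_k-s_{k+1},w_k-w_{k+1})$ is unchanged (since $G(s,\cdot)$ is even), while $W_j^{\alpha^j}=\prod_k w_k^{\alpha_k^j}$ picks up the global sign $(-1)^{|\alpha^j|}$, so the integral equals its own negative.

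Since $|\alpha^j|=\ell-2j$ has the same parity as $\ell$, every summand in the formula $\mathscr{P}_\ell(V)=\sum_j(-1)^j\sum_{|\alpha^j|=\ell-2j}c_{\alpha^j}P_{\alpha^j}(V)$ vanishes when $\ell$ is odd. Dropping the odd-$\ell$ terms from the expansion of the first step and re-indexing $\ell=2m$ yields
$$ Z^V(t)=\sum_{m=1}^{p} t^m\,\mathscr{P}_{2m}(V) + O(t^{p+1}), $$
and the same identity holds for $Z_\Omega^V(t)$; identifying $\mathcal{P}_{2m}(V)=\mathscr{P}_{2m}(V)$ through \eqref{e12b} then delivers the stated asymptotics. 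There is no real obstacle here: the only non-bookkeeping step is the parity cancellation, which is a one-line consequence of the evenness of the Gaussian, and Proposition \ref{pr-asymptotic} has been formulated so that everything else is routine re-indexing.
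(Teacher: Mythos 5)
Your proof is correct and follows essentially the same route as the paper's: the crux in both is the parity argument that the change of variables $w_k\mapsto -w_k$ forces $c_{\alpha^j}=(-1)^{|\alpha^j|}c_{\alpha^j}$, hence $c_{\alpha^j}=0$ for $|\alpha^j|$ odd, so $\mathscr{P}_\ell(V)=0$ for odd $\ell$, and the odd half-integer powers drop out of the expansion from Proposition \ref{pr-asymptotic} (applied with $2p+2$ in place of $p$). The only cosmetic difference is that you invoke Proposition \ref{pr-asymptotic} directly for $Z^V$ and $Z_\Omega^V$, whereas the paper runs the same cancellation at the level of \eqref{e9} and then refers back to the concluding step of Proposition \ref{pr-asymptotic}; the content is identical.
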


\begin{proof}
Upon performing the change of variables $(w_1,\ldots ,w_j)\longrightarrow (-w_1,\ldots ,-w_j)$ in the rhs of \eqref{e11} we get that $c_{\alpha ^j}=(-1)^{|\alpha ^j|}c_{\alpha^^j}$.
Therefore $c_{\alpha ^j}=0$ hence $\mathscr{P}_{\alpha_ j}(V)=0$ by \eqref{e12}, for $|\alpha ^j|$ odd. As a consequence we have
$$
\mathcal{P}_{2\ell +1}(V)=\sum_{1 \leq j \leq \ell} (-1)^j \sum_{|\alpha ^j|=2(\ell -j)+1} \mathscr{P}_{\alpha_ j}(V)  =0.
$$
Thus, applying \eqref{e9} where $2(p+1)$ is substituted for $p$, we find out that
$$
t^n \sum_{j \geq 1} B_j ^V(t^2) = \sum_{\ell =2}^{2p+1} t^{\ell } \mathscr{P}_\ell (V) +O(t^{2(p+1)})=\sum_{\ell =1}^{p} t^{2 \ell } \mathscr{P}_{2 \ell} (V) +O(t^{2(p+1)}),
$$
which, in turns, yields
$$ 
t^{n/2} \sum_{j \geq 1} B_j ^V(t^2) = \sum_{\ell =1}^{p}t^{\ell }\mathscr{P}_{2\ell }(V) +O(t^{p+1}).
$$
Now the result follows from this by arguing as in the proof of Proposition \ref{pr-asymptotic}.
\end{proof}

\begin{remark}
It is clear that the asymptotic formula stated in Proposition \ref{pr-asymptotic} (resp., Proposition \ref{pr-asymptotic2}) for $Z^V$ remains valid upon substituting $\int_{\mathbb{R}^n} c_{\alpha ^j}(x)\prod_{k=1}^j\partial ^{\alpha _k^j}V(x)dx$ (resp., $c_{\alpha^j} \int_{\mathbb{R}^n} \prod_{k=1}^j\partial ^{\alpha _k^j}V(x)dx$) for $\mathscr{P}_{\alpha^j}(V)$, if $V$ is taken in the Schwartz class $\mathscr{S}(\mathbb{R}^n)$.
\end{remark}


\section{Two parameter integrals}

In this section we collect useful properties of two parameter integrals appearing in the proof of Theorem \ref{thm-comp}, presented in section \ref{sec-proof}. As a preamble we consider the integral
\bel{ap1}
I_n(f)=\int_{\mathbb{R}^n}\int_{\mathbb{R}^n}\int_0^1\int_0^{s_1}f(w_1,w_2)G(1-s_1,w_1)G(s_1-s_2,w_1-w_2)G(s_2,w_2)dw_1dw_2ds_1ds_2,
\ee
where $f \in C^\infty (\mathbb{R}^n\times \mathbb{R}^n)$ and $G$ is defined by \eqref{a13}. 
For all $\sigma \in \mathfrak{\sigma}_n$, the set of permutations of $\{ 1,\ldots ,n\}$, 
and all $z=(z_1,\ldots ,z_n)\in \mathbb{R}^n$, we write $\sigma z=(z_{\sigma(1)},\ldots , z_{\sigma (n)})$. Similarly, for every $w_1, w_2\in \mathbb{R}^n$, we note $\sigma (w_1,w_2)= (\sigma w_1,\sigma w_2)$ and $f\circ \sigma (w_1,w_2)=f(\sigma (w_1,w_2))$. The following result gathers several  properties of $I_n$ that are required in the remaining part of this section.

\begin{lemma}
\label{lm-int}
Let $f \in C^\infty (\mathbb{R}^n\times \mathbb{R}^n)$. Then it holds true that:
\begin{enumerate}[i)]
\item $I_n(f)=I_n(Sf)$, where $S$ denotes the ``mirror symmetry" operator acting as $S f(w_1,w_2)=f(w_2,w_1)$;
\item $I_n(f)=I_n(f \circ \sigma )$ for all $\sigma \in \mathfrak{\sigma}_n$;
\item If there are $f_k \in C^\infty (\R \times \R)$, $k=1,\ldots,n$, such that
$$f(w_1,w_2)=\prod_{k=1}^n f_k(w_1^k, w_2^k),\ w_i=(w_i^1,\ldots,w_i^n), i=1,2, $$
and if any of the $f_k$ is an odd function of $(w_1^k, w_2^k)$, then we have $I_n(f)=0$.
\end{enumerate}
\end{lemma}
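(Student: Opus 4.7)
The plan is to prove each of the three assertions by exploiting symmetries of the integrand, using only that the Gaussian kernel $G(t,w)=(4\pi t)^{-n/2}e^{-|w|^2/(4t)}$ defined by \eqref{a13} is even in its spatial variable, invariant under permutations of coordinates, and factors as a tensor product of one-dimensional Gaussians $g(t,x)=(4\pi t)^{-1/2}e^{-x^2/(4t)}$.

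For (i), the idea is to reverse the ``time'' direction by performing in \eqref{ap1} the change of variables $(s_1,s_2,w_1,w_2)\mapsto(1-s_2,1-s_1,w_2,w_1)$. The simplex $\{0<s_2<s_1<1\}$ is invariant under $(s_1,s_2)\mapsto(1-s_2,1-s_1)$, and the three time arguments of the Gaussians get permuted as $1-s_1\mapsto s_2$, $s_1-s_2\mapsto s_1-s_2$, $s_2\mapsto 1-s_1$. After also using $G(t,w)=G(t,-w)$ to identify $G(s_1-s_2,w_2-w_1)$ with $G(s_1-s_2,w_1-w_2)$, the original integrand reappears with $f(w_1,w_2)$ replaced by $f(w_2,w_1)=(Sf)(w_1,w_2)$, which yields $I_n(f)=I_n(Sf)$.

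For (ii), the plan is to apply, for fixed $(s_1,s_2)$, the substitution $w_i\mapsto\sigma^{-1}w_i$ ($i=1,2$) in the spatial integral. Since $G$ depends on $w$ only through $|w|^2$ and $|\sigma w|=|w|$, and since a coordinate permutation has unit Jacobian, the three Gaussian factors are preserved, and only $f$ gets replaced by $f\circ\sigma$. For (iii), I would use the product decomposition $G(t,w)=\prod_{k=1}^n g(t,w^k)$ together with the assumed product form of $f$ and Fubini's theorem to factor
\begin{equation*}
I_n(f)=\int_0^1\!\!\int_0^{s_1}\prod_{k=1}^n J_k(s_1,s_2)\,ds_2\,ds_1,
\end{equation*}
where
\begin{equation*}
J_k(s_1,s_2)=\int_{\R^2}f_k(u,v)\,g(1-s_1,u)\,g(s_1-s_2,u-v)\,g(s_2,v)\,du\,dv.
\end{equation*}
If some $f_{k_0}$ is odd in $(u,v)$, the change of variable $(u,v)\mapsto(-u,-v)$, together with the evenness of each $g$, forces $J_{k_0}(s_1,s_2)=0$ for all $(s_1,s_2)$, and hence $I_n(f)=0$.

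There is no single serious obstacle here: the lemma is a catalogue of symmetry identities. The mildest subtlety is in (i), where care is needed to check that the involution $(s_1,s_2)\mapsto(1-s_2,1-s_1)$ preserves the ordered simplex and that the three heat-kernel time arguments are correctly rematched; everything else is a straightforward unitary change of variables.
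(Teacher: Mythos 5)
Your proof is correct and follows the paper's approach for all three parts: part (i) reverses the time simplex via $(s_1,s_2)\mapsto(1-s_2,1-s_1)$ together with the relabelling $(w_1,w_2)\mapsto(w_2,w_1)$ and evenness of $G$; part (ii) uses the permutation invariance of the Gaussian and a unit-Jacobian change of variables; part (iii) uses the tensor factorization $G(t,w)=\prod_k g(t,w^k)$. One small point in your favor: in (iii) the paper invokes the identity $I_n(f)=\prod_{k=1}^n I_1(f_k)$, which as literally written would require the shared $(s_1,s_2)$-integration to factor across the $n$ coordinates (it does not); your version, keeping the time integration outside and writing $I_n(f)=\int_0^1\int_0^{s_1}\prod_{k=1}^n J_k(s_1,s_2)\,ds_2\,ds_1$, is the correct factorization and still yields the conclusion immediately once $J_{k_0}\equiv 0$.
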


\begin{proof}
i) Upon performing successively the two changes of variables $\tau _1=1-s_1$ and $\tau _2=1-s_2$ in the rhs of \eqref{ap1}, we get that
\beas
I_n(f) &=&\int_{\mathbb{R}^n}\int_{\mathbb{R}^n}\int_0^1\int_{\tau _1}^1f(w_1,w_2)G(\tau_1,w_1)G(\tau_2-\tau_1,w_1-w_2)G(1-\tau_2,w_2)dw_1dw_2d\tau_2d\tau_1\\
\\
&=&\int_{\mathbb{R}^n}\int_{\mathbb{R}^n}\int_0^1\int_0^{\tau _2}f(w_1,w_2)G(\tau_1,w_1)G(\tau_2-\tau_1,w_1-w_2)G(1-\tau_2,w_2)dw_1dw_2d\tau_1d\tau_2,
\eeas
so the result follows by relabelling $(w_1,w_2)$ as $(w_2,w_1)$.\\
ii) In light of \eqref{a13} we have $G(t,w)=G(t,\sigma^{-1} w)$ for all $t>0$, $w \in \R^n$ and $\sigma \in \mathfrak{\sigma}_n$, hence
$I_n(f)$ is equal to
$$ \int_{\mathbb{R}^n}\int_{\mathbb{R}^n}\int_0^1\int_0^{s_1}f(w_1,w_2)G(1-s_1,\sigma ^{-1}w_1)G(s_1-s_2,\sigma ^{-1}w_1-\sigma ^{-1}w_2)G(s_2,\sigma ^{-1}w_2)dw_1dw_2ds_1ds_2,$$
according to \eqref{ap1}.
The result follows readily from this upon performing the change of variable $(\widetilde{w}_1,\widetilde{w}_2)=\sigma^{-1}(w_1,w_2)$.\\
iii) This point is a direct consequence of the obvious identity $I_n(f)=\prod_{k=1}^n I_1(f_k)$, arising from \eqref{a13} and \eqref{ap1}.
\end{proof}

We turn now to evaluating integrals of the form
\bel{ap2}
I_{\alpha ,\beta}=I_{\alpha ,\beta}(s_1,s_2)=\int_{\mathbb{R}}\int_{\mathbb{R}}x^\alpha y^\beta g(1-s_1,x)g(s_1-s_2, x-y)g(s_2,y)dxdy,\ \alpha , \beta \in \N,\ s_1 , s_2 \in \R,
\ee
for $\alpha +\beta$ even, where $g$ denotes the one-dimensional Gaussian kernel defined by \eqref{a13} in the particular case where $n=1$.
This can be achieved upon using the following result.
\begin{lemma}
\label{lm-int2}
For all $\alpha, \beta \in \N$ and all $s_1,s_2 \in \R$, we have:
\begin{enumerate}[i)]
\item $I_{1,1}(s_1,s_2)=2(4\pi )^{-1/2}(1-s_1)s_2$; \vspace*{.1cm}
\item $I_{\alpha ,\beta}(s_1,s_2)=2(1-s_1)s_2\left[ 2(\alpha -1)(\beta -1)(s_1-s_2)I_{\alpha -2,\beta -2}(s_1,s_2) +(\alpha +\beta -1)I_{\alpha -1,\beta -1}(s_1,s_2)\right]$; \vspace*{.1cm}
\item $I_{\alpha ,\beta}(s_1,s_2)=2(1-s_1)\left[(\alpha -1)s_1I_{\alpha -2,\beta}(s_1,s_2)+\beta s_2 I_{\alpha -1,\beta -1}(s_1,s_2) \right]$; \vspace*{.1cm}
\item $I_{\alpha ,\beta}(s_1,s_2)=2(1-s_1)\left[(\alpha +\beta -1)s_1I_{\alpha -2,\beta}(s_1,s_2)-2 \beta (\beta -1) s_2(s_1-s_2)I_{\alpha -2,\beta -2}(s_1,s_2) \right]$; \vspace*{.1cm}
\item $I_{2\alpha ,0}(s_1,s_2)=(4\pi )^{-1/2} (2\alpha )! \slash (\alpha !) s_1^\alpha (1-s_1)^\alpha$; \vspace*{.1cm}
\item $I_{0,2\alpha}(s_1,s_2)=(4\pi )^{-1/2} (2\alpha )! \slash (\alpha !) s_2^\alpha (1-s_2)^\alpha$.
\end{enumerate}
\end{lemma}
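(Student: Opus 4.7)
\medskip
\noindent\textbf{Proof plan.} The unifying observation is that, writing $P(x,y) := g(1-s_1,x)\, g(s_1-s_2,x-y)\, g(s_2,y)$, the elementary identity $\partial_z g(t,z) = -z g(t,z)/(2t)$ gives
$$\partial_x P = -\frac{x}{2(1-s_1)} P - \frac{x-y}{2(s_1-s_2)} P, \qquad \partial_y P = \frac{x-y}{2(s_1-s_2)} P - \frac{y}{2 s_2} P.$$
Inverting this $2 \times 2$ linear system for $(xP, yP)$ produces the master identities
\begin{equation} \label{plan-master}
xP = -2(1-s_1)\bigl[s_1 \partial_x P + s_2 \partial_y P\bigr], \qquad yP = -2 s_2 \bigl[(1-s_1) \partial_x P + (1-s_2) \partial_y P\bigr].
\end{equation}
The semigroup identity $\int_\R g(s_1-s_2, x-y) g(s_2, y)\,dy = g(s_1,x)$ combined with the pointwise factorization $g(1-s_1, x) g(s_1, x) = (4\pi)^{-1/2} g(s_1(1-s_1),x)$ (obtained by completing the square in the exponent) yields the base case $I_{0,0} = (4\pi)^{-1/2}$.

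Parts (v) and (vi) follow directly from this reduction: $I_{2\alpha,0}$ reduces to $(4\pi)^{-1/2} \int_\R x^{2\alpha} g(s_1(1-s_1),x)\,dx$, which evaluates to $(4\pi)^{-1/2} (2\alpha)!/\alpha!\,[s_1(1-s_1)]^\alpha$ by the classical even-moment formula for a centered Gaussian, and (vi) is obtained by the symmetric reduction performed first in $x$. For (iii), I would multiply the first identity of \eqref{plan-master} by $x^{\alpha-1} y^\beta$, integrate over $\R^2$, and integrate by parts in the $\partial_x P$ and $\partial_y P$ terms (the boundary terms vanish by the Gaussian decay of $P$); this directly produces the stated recurrence. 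Part (i) is then the special case $\alpha=\beta=1$ of (iii) together with the value of $I_{0,0}$.

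The harder parts (ii) and (iv) couple $I_{\alpha-2,\beta-2}$ with $I_{\alpha-1,\beta-1}$ or $I_{\alpha-2,\beta}$, and are not single-step consequences of \eqref{plan-master}. My plan is to combine (iii) with the mirror counterpart
$$I_{\alpha,\beta} = 2 s_2 \bigl[\alpha(1-s_1) I_{\alpha-1,\beta-1} + (\beta-1)(1-s_2) I_{\alpha,\beta-2}\bigr],$$
obtained by the same integration-by-parts strategy applied to the second identity of \eqref{plan-master}. For (iv), I apply (iii) at the shifted index $(\alpha-1,\beta-1)$ and the mirror recurrence at $(\alpha-2,\beta)$; these yield two expressions for $I_{\alpha-3,\beta-1}$, and eliminating this quantity produces an identity of the form $s_2 I_{\alpha-1,\beta-1} = s_1 I_{\alpha-2,\beta} - 2 s_2 (\beta-1)(s_1-s_2) I_{\alpha-2,\beta-2}$ which, substituted back into (iii) at $(\alpha,\beta)$, gives (iv). For (ii), I analogously eliminate $I_{\alpha-1,\beta-3}$ between the mirror recurrence at $(\alpha-1,\beta-1)$ and (iii) at $(\alpha,\beta-2)$, rewrite $I_{\alpha,\beta-2}$ in terms of $I_{\alpha-1,\beta-1}$ and $I_{\alpha-2,\beta-2}$, and feed the result into the mirror recurrence at $(\alpha,\beta)$. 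In both arguments the simple algebraic identity $s_1(1-s_2) - s_2(1-s_1) = s_1 - s_2$ generates the factor $(s_1-s_2)$ visible in the statements, and the principal difficulty is the careful bookkeeping of these eliminations rather than any conceptual hurdle.
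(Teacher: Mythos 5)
Your proof is correct, and it works at every step: the ``master identities'' are valid (inverting the $2\times 2$ system gives $xP=-2(1-s_1)[s_1\partial_xP+s_2\partial_yP]$ and $yP=-2s_2[(1-s_1)\partial_xP+(1-s_2)\partial_yP]$ exactly as you state), multiplying by $x^{\alpha-1}y^\beta$ or $x^\alpha y^{\beta-1}$ and integrating by parts yields (iii) and its mirror directly, and your two eliminations produce (iv) and (ii) after the simplification $s_1(1-s_2)-s_2(1-s_1)=s_1-s_2$. The paper's proof uses the same elementary ingredient $zg(t,z)=-2t\partial_z g(t,z)$, but applies it to one Gaussian factor at a time; the factor $(x-y)$ from differentiating $g(s_1-s_2,x-y)$ then has to be split as $x-y$ and rearranged, so the paper's intermediate identities (their (ap5)--(ap8)) involve the mixed index $I_{\alpha-1,\beta+1}$ and a reshuffling step that your approach avoids by eliminating $(x-y)P$ at the outset. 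Your route thus reaches (iii) and the mirror recursion in one clean step each, whereas the paper arrives at (iii) and (iv) only after combining two auxiliary identities; (ii) requires an elimination in both treatments. For (v) and (vi) the two arguments also diverge: the paper establishes the one-variable recursion $I_{\alpha,0}=2(\alpha-1)s_1(1-s_1)I_{\alpha-2,0}$ and inducts, while you collapse the inner integrals via the semigroup identity and the pointwise factorization $g(1-s_1,x)g(s_1,x)=(4\pi)^{-1/2}g(s_1(1-s_1),x)$, reducing to the classical Gaussian even-moment formula; the latter is shorter and more transparent. One small caveat shared by both proofs: the identities (ii)--(iv) as literally written involve $I_{\alpha-2,\cdot}$ and $I_{\cdot,\beta-2}$ with nonzero coefficients, so they only make sense for $\alpha,\beta\geq 2$ (or with the convention that out-of-range terms carry vanishing coefficients), and the intermediate division by $1-s_2$ in your derivation of (ii) is harmless because both sides are polynomials in $(s_1,s_2)$. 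Neither issue affects the validity of the argument.
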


\begin{proof}
a) In light of the basic identity 
\bel{ap3}
zg(t,z)=-2t \partial_z g(t,z),\ t>0,\ z \in \R, 
\ee
we have
\beas
& & \int_{\mathbb{R}}\int_{\mathbb{R}} xyg(1-s_1,x) g(s_1-s_2, x-y)g(s_2,y)dxdy \\
&= &-2(1-s_1)\int_{\mathbb{R}}\int_{\mathbb{R}}y\partial_xg(1-s_1,x)g(s_1-s_2, x-y)g(s_2,y)dxdy \\
&=& 2(1-s_1)\int_{\mathbb{R}}\int_{\mathbb{R}}yg(1-s_1,x)\partial_xg(s_1-s_2, x-y)g(s_2,y)dxdy,
\eeas
by integrating by parts. Thus, applying \eqref{ap3} once more, we obtain that
\bea
& & \int_{\mathbb{R}}\int_{\mathbb{R}} xyg(1-s_1,x) g(s_1-s_2, x-y)g(s_2,y)dxdy \nonumber\\
&=& -2(1-s_1)\int_{\mathbb{R}}\int_{\mathbb{R}}yg(1-s_1,x)\partial_yg(s_1-s_2, x-y)g(s_2,y)dxdy \nonumber \\
&=& 2(1-s_1)\int_{\mathbb{R}}\int_{\mathbb{R}}yg(1-s_1,x)g(s_1-s_2, x-y)\partial _yg(s_2,y)dxdy +2(1-s_1)(4\pi )^{-1/2} \nonumber  \\
&=& 2(1-s_1)\int_{\mathbb{R}}yg(1-s_2,y)\partial _yg(s_2,y)dy+2(1-s_1)(4\pi )^{-1/2}, \label{ap4}
\eea
with the help of the reproducing property. On the other hand, an integration by parts providing
\beas
\int_{\mathbb{R}}yg(1-s_2,y)\partial _yg(s_2,y)dy&=&  -\int_{\mathbb{R}}g(1-s_2,y)g(s_2,y)dy-\int_{\mathbb{R}}y\partial _yg(1-s_2,y)g(s_2,y)dy \\
&=&-(4\pi)^{-1/2}-\frac{s_2}{1-s_2} \int_{\mathbb{R}}yg(1-s_2,y)\partial _yg(s_2,y)dy,
\eeas
we get that $\int_{\mathbb{R}}yg(1-s_2,y)\partial _yg(s_2,y)dy=-(4\pi)^{-1/2}(1-s_2)$. Thus Part i) follows from this and \eqref{ap4}.\\
b) Applying \eqref{ap3} with $z=x$ and $t=1-s_1$ we find that
\beas
I_{\alpha ,\beta}(s_1,s_2) &=& -2(1-s_1)\int_{\mathbb{R}}\int_{\mathbb{R}}x^{\alpha -1}y^\beta \partial _xg(1-s_1,x)g(s_1-s_2, x-y)g(s_2,y)dxdy \\
&=&2(\alpha -1)(1-s_1)\int_{\mathbb{R}}\int_{\mathbb{R}}x^{\alpha -2}y^\beta g(1-s_1,x)g(s_1-s_2, x-y)g(s_2,y)dxdy \\
& & -\frac{2(1-s_1)}{2(s_1-s_2)}\int_{\mathbb{R}}\int_{\mathbb{R}}x^{\alpha -1}y^\beta (x-y)g(1-s_1,x)g(s_1-s_2, x-y)g(s_2,y)dxdy,
\eeas
by integrating by parts wrt $x$, so we get
\bel{ap5}
(1-s_2) I_{\alpha ,\beta}(s_1,s_2)=2(\alpha -1)(1-s_1)(s_1-s_2)I_{\alpha -2,\beta}(s_1,s_2)+(1-s_1)I_{\alpha -1,\beta +1}(s_1,s_2).
\ee
Doing the same with $z=y$ and $t=s_2$ we obtain that
\bel{ap6}
s_1 I_{\alpha ,\beta}(s_1,s_2)=2 (\beta -1) (s_1-s_2) s_2 I_{\alpha ,\beta -2}(s_1,s_2)+s_2 I_{\alpha +1,\beta -1}(s_1,s_2).
\ee
Thus, upon successively substituting $(\alpha -1,\beta +1)$ and $(\alpha -2,\beta )$ for $(\alpha,\beta)$ in \eqref{ap6}, we find that
\bel{ap7}
s_1 I_{\alpha -1 ,\beta +1}(s_1,s_2)=2 \beta s_2(s_1-s_2) I_{\alpha -1,\beta -1}(s_1,s_2)+s_2 I_{\alpha ,\beta}(s_1,s_2)
\ee
and
\bel{ap8}
s_1 I_{\alpha -2 ,\beta}(s_1,s_2)=2 (\beta -1) (s_1-s_2) I_{\alpha -2,\beta -2}(s_1,s_2)+s_2 I_{\alpha -1,\beta -1}(s_1,s_2).
\ee
Plugging \eqref{ap7}-\eqref{ap8} in \eqref{ap5} we end up getting Part ii). Further we obtain Part iii) by following the same lines as in the derivation of Part ii), and Part iv) is a direct consequence of Parts ii) and iii).\\
c) Arguing as in the derivation of Part i) in a), we establish for any $\alpha \geq 2$ that
$$
I_{\alpha ,0}(s_1,s_2)=2(\alpha -1) s_1 (1-s_1) I_{\alpha -2,0}(s_1,s_2).
$$
This and the obvious identity $I_{0,0}(s_1,s_2)=(4\pi )^{-1/2}$ yields Part v) upon proceeding by induction on $\alpha$. Finally, Part vi) follows from 
Part v) upon noticing from \eqref{ap2} that $I_{0,\alpha}(s_1,s_2) = I_{\alpha,0}(1-s_2,1-s_1)$.
\end{proof}

Further, for all $\alpha =(\alpha_k)_{1 \leq k \leq n}$ and $\beta =(\beta _k)_{1 \leq k \leq n}$ in $\mathbb{N}^n$, we put
\bel{ap9}
\mathscr{I}(\alpha ,\beta )=\int_0^1\int_0^{s_1} \left( \int_{\mathbb{R}^n}\int_{\mathbb{R}^n}x^\alpha y^\beta G(1-s_1,x)G(s_1-s_2, x-y)G(s_2,y)dxdy \right) ds_1ds_2,
\ee
and establish the:
\begin{lemma}
\label{lm-I}
For each $\alpha =(\alpha_k)_{1 \leq k \leq n}$ and $\beta =(\beta _k)_{1 \leq k \leq n}$ in $\mathbb{N}^n$ we have:
\begin{enumerate}[i)]
\item 
$\mathscr{I}(\alpha ,\beta )=\int_0^1\int_0^{s_1}\prod_{k=1}^n I_{\alpha_k,\beta _k}(s_1,s_2)ds_1ds_2$.
\item
$\mathscr{J}(\alpha ,\beta )=\mathscr{J}(\beta ,\alpha )$.
\item 
$\mathscr{J}(\alpha ,\beta )=0$ if any of sums $\alpha _k+\beta _k$ for $1 \leq k \leq n$, is odd.
\end{enumerate}
\end{lemma}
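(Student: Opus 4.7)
Part (i) is a pure factorization argument. The Gaussian kernel defined in \eqref{a13} has the tensor-product form $G(t,x)=\prod_{k=1}^n g(t,x_k)$, where $g(t,z)=(4\pi t)^{-1/2}e^{-z^2/(4t)}$ is the one-dimensional kernel appearing in \eqref{ap2}, and the monomial splits as $x^\alpha y^\beta=\prod_{k=1}^n x_k^{\alpha_k}y_k^{\beta_k}$. Substituting these factorizations into the definition \eqref{ap9} of $\mathscr{I}(\alpha,\beta)$ and applying Fubini's theorem to the $2n$-fold integral over $\mathbb{R}^n\times\mathbb{R}^n$ yields the desired identity $\mathscr{I}(\alpha,\beta)=\int_0^1\int_0^{s_1}\prod_{k=1}^n I_{\alpha_k,\beta_k}(s_1,s_2)ds_1ds_2$, with each $I_{\alpha_k,\beta_k}$ as in \eqref{ap2}. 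The integrability needed to justify Fubini is routine from Gaussian moment bounds and uniform in $(s_1,s_2)$.

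For parts (ii) and (iii), which are stated for $\mathscr{J}(\alpha,\beta)$, the key observation is that $\mathscr{J}(\alpha,\beta)$ has exactly the shape of the integral $I_n(f)$ defined by \eqref{ap1}, with the specific test function $f_{\alpha,\beta}(w_1,w_2)=w_1^\alpha w_2^\beta=\prod_{k=1}^n (w_1^k)^{\alpha_k}(w_2^k)^{\beta_k}$. Thus $\mathscr{J}(\alpha,\beta)=I_n(f_{\alpha,\beta})$, and both assertions are then inherited from the corresponding items of Lemma \ref{lm-int}. Concretely, for (ii) the mirror-symmetry operator $S$ sends $f_{\alpha,\beta}$ to $f_{\beta,\alpha}$, and part i) of Lemma \ref{lm-int} gives $\mathscr{J}(\alpha,\beta)=I_n(f_{\alpha,\beta})=I_n(Sf_{\alpha,\beta})=I_n(f_{\beta,\alpha})=\mathscr{J}(\beta,\alpha)$. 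For (iii), the tensor decomposition $f_{\alpha,\beta}=\prod_k f_k$ with $f_k(u,v)=u^{\alpha_k}v^{\beta_k}$ shows that if some $\alpha_{k_0}+\beta_{k_0}$ is odd then $f_{k_0}(-u,-v)=(-1)^{\alpha_{k_0}+\beta_{k_0}}f_{k_0}(u,v)=-f_{k_0}(u,v)$, so $f_{k_0}$ is odd in $(u,v)$ and part iii) of Lemma \ref{lm-int} delivers $\mathscr{J}(\alpha,\beta)=0$.

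Since the substantial analytic content — the $S$-invariance of $I_n$ and its vanishing on tensor integrands with an odd factor — has already been extracted in Lemma \ref{lm-int}, there is no genuine obstacle in this lemma: the proof reduces to a one-line Fubini factorization for (i) and two elementary applications of Lemma \ref{lm-int} to the specific test function $f_{\alpha,\beta}$ for (ii) and (iii). The only point requiring a moment's care is the identification $\mathscr{J}(\alpha,\beta)=I_n(f_{\alpha,\beta})$, which is just a matter of comparing \eqref{ap1} with \eqref{ap9} after noting that the ``polynomial weight'' of \eqref{ap1} is the test function itself.
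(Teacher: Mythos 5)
Your proof is correct and follows essentially the same route as the paper: part (i) by the tensor factorization $G(t,x)=\prod_k g(t,x_k)$ of the Gaussian together with the factorization of the monomial, and parts (ii) and (iii) by identifying $\mathscr{I}(\alpha,\beta)=I_n(f_{\alpha,\beta})$ with $f_{\alpha,\beta}(w_1,w_2)=w_1^\alpha w_2^\beta$ and invoking parts i) and iii) of Lemma \ref{lm-int}. (You have also silently, and correctly, read the paper's $\mathscr{J}$ in parts (ii)--(iii) as the $\mathscr{I}$ of \eqref{ap9}, since $\mathscr{J}$ is never defined and is evidently a typographical slip.)
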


\begin{proof}
Part i) follows readily from the identity $G(s,z)=\prod_{k=1}^n g(s,z_k)$ arising from \eqref{a13} for all $s \in \R^*$ and all $z=(z_k)_{1 \leq k \leq n} \in \R^n$, and from the very definitions \eqref{ap2} and \eqref{ap9}. Next, Part ii) is a direct consequence first assertion of Lemma \eqref{lm-int}, while Part iii) follows from the third point of Lemma \eqref{lm-int}.
\end{proof}


\section{Proof of Theorem \ref{thm-comp}}
\label{sec-proof}

We start by establishing two identities which are useful for the proof of Theorem \ref{thm-comp}.

\subsection{Two useful identities}
They are collected in the following:
\begin{proposition}
\label{pr-id}
Let $V \in C_0^\infty(\Omega)$ be real-valued and assume that ${\bf a}={\bf I}$. Then, with reference to the definitions \eqref{e11}-\eqref{e12}, we have
\bel{pf1}
(4\pi )^{n/2}\sum_{|\alpha ^2|=2}c_{\alpha ^2}P_{\alpha ^2}(V )= -\frac{1}{12}\int_\Omega |\nabla V|^2dx 
\ee
and
\bel{pf2}
(4\pi)^{n/2}\sum_{|\alpha ^2|=4}c_{\alpha ^2}P_{\alpha ^2}(V)=\frac{1}{120}\sum_{k}\int_\Omega \left( \partial _{kk}^2 V\right)^2dx+\frac{13}{360}\sum_{k\neq \ell }\int_\Omega \left( \partial _{k\ell}^2 V\right)^2dx.
\ee
\end{proposition}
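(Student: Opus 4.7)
The plan is to enumerate the tuples $\alpha^2=(\alpha_1^2,\alpha_2^2)$ with $|\alpha^2|\in\{2,4\}$ for which $c_{\alpha^2}\neq 0$, to evaluate each $c_{\alpha^2}$ and each $P_{\alpha^2}(V)$ in closed form, and then to sum. The third part of Lemma \ref{lm-I} reduces the enumeration: only tuples with $\alpha_1^{2,i}+\alpha_2^{2,i}$ even in every coordinate $i$ can contribute, which forces the non-zero components of $\alpha^2$ to lie in at most $|\alpha^2|/2$ coordinates. For $|\alpha^2|=2$ one is left, for each $k$, with $(2e_k,0)$, $(0,2e_k)$ and $(e_k,e_k)$. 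For $|\alpha^2|=4$ the admissible tuples split into two families: (i) five subtypes concentrated in a single coordinate $k$, indexed by the ordered partitions $(a,b)$ of $4$ with $a+b=4$; and (ii) for each unordered pair $\{k,\ell\}$ of distinct coordinates, nine subtypes distributed as $2+2$, indexed by the product of the two ordered partitions $2=a+b$.

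The coefficient $c_{\alpha^2}$ of each admissible tuple is evaluated by combining Lemma \ref{lm-I} i), which writes $\mathscr{I}(\alpha_1^2,\alpha_2^2)$ as a time integral of the product $\prod_{i=1}^{n} I_{\alpha_1^{2,i},\alpha_2^{2,i}}(s_1,s_2)$, with the closed-form values of $I_{\alpha,\beta}$ supplied by Lemma \ref{lm-int2}. The resulting integrands in $(s_1,s_2)$ are polynomials, whose double integral over $\{0<s_2<s_1<1\}$ reduces to Beta-function values. In parallel, I would rewrite each $P_{\alpha^2}(V)=\int_\Omega \partial^{\alpha_1^2}V\cdot \partial^{\alpha_2^2}V\,dx$ in canonical form via integration by parts, which is permitted because $V\in C_0^\infty(\Omega)$. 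The recurring identity $\int_\Omega \partial_k^2 V\cdot \partial_\ell^2 V\,dx=\int_\Omega (\partial_{k\ell}^2 V)^2\,dx$, valid for all $k$ and $\ell$, reduces every $P_{\alpha^2}(V)$ to $\pm \int_\Omega (\partial_k V)^2 dx$ in the weight-$2$ case, and to $\pm \int_\Omega (\partial_{kk}^2 V)^2 dx$ or $\pm \int_\Omega (\partial_{k\ell}^2 V)^2 dx$ in the weight-$4$ case.

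Collecting the contributions then proves the two identities. In the weight-$4$ case, only the tuples of type (i) feed into the coefficient of $\sum_k \int_\Omega(\partial_{kk}^2 V)^2 dx$, while only the tuples of type (ii) feed into the coefficient of $\sum_{k\neq\ell}\int_\Omega(\partial_{k\ell}^2 V)^2 dx$, so the two coefficients can be computed independently and then multiplied by $(4\pi)^{n/2}$ to cancel the common $(4\pi)^{-n/2}$ coming from every $c_{\alpha^2}$. The main obstacle is the bookkeeping for the nine subtypes per unordered pair in case (ii): one must track the IBP sign of each $P_{\alpha^2}(V)$, the factorial $\alpha^2!=\alpha_1^2!\,\alpha_2^2!$ and the resulting Beta-function value, and then reduce a rational combination to the stated coefficients. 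No analytic ingredient beyond the lemmas of the preceding section is required.
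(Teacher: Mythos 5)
Your proposal follows the same route as the paper's proof: reduce each $c_{\alpha^2}$ to a one-dimensional Beta integral via Lemma~\ref{lm-I}~i) and the closed forms in Lemma~\ref{lm-int2}, reduce each $P_{\alpha^2}(V)$ by integration by parts using $\int_\Omega \partial^2_{kk}V\,\partial^2_{\ell\ell}V\,dx=\int_\Omega(\partial^2_{k\ell}V)^2\,dx$, and sum. One thing you do more carefully than the paper is the enumeration in the weight-$4$ cross-coordinate case: you correctly note there are \emph{nine} admissible tuples $(\alpha_1^2,\alpha_2^2)$ per unordered pair $\{k,\ell\}$ (one for each pair of ordered decompositions of the $k$- and $\ell$-components into $2=a+b$), whereas the paper's proof evaluates only $(2e_k,2e_\ell)$ and $(e_k+e_\ell,e_k+e_\ell)$ and never addresses the remaining seven, e.g.\ $(2e_k+2e_\ell,\,0)$ or $(2e_k+e_\ell,\,e_\ell)$, each of which has $c_{\alpha^2}=(4\pi)^{-n/2}/60\neq 0$ by Lemma~\ref{lm-int2}.

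The gap in your proposal is that you stop before the arithmetic, asserting only that ``collecting the contributions then proves the two identities.'' That assertion is unverified, and it is in fact false for \eqref{pf2} as stated: carrying out the bookkeeping over all nine subtypes gives, per unordered pair $\{k,\ell\}$, the total $\left(\tfrac{2}{60}-\tfrac{4}{60}+\tfrac{2}{72}+\tfrac{1}{45}\right)(4\pi)^{-n/2}=\tfrac{1}{60}(4\pi)^{-n/2}$, hence $\tfrac{1}{120}$ per ordered pair $(k,\ell)$, not $\tfrac{13}{360}$. You can cross-check independently: the $t^4$-coefficient of $(4\pi t)^{n/2}$ times the $j=2$ Duhamel trace equals $\tfrac{1}{120}\int_\Omega V\,\Delta^2V\,dx=\tfrac{1}{120}\sum_{k,\ell}\int_\Omega(\partial^2_{k\ell}V)^2\,dx$, since $\int_{\R^n}G(a,z)V(x+z)\,dz=V(x)+a\Delta V(x)+\tfrac{a^2}{2}\Delta^2V(x)+O(a^3)$ and $\int_0^t(t-\sigma)\bigl(\sigma(t-\sigma)/t\bigr)^2\,d\sigma=t^4/60$. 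So the right-hand side of \eqref{pf2} should read $\tfrac{1}{120}\sum_{k,\ell}\int_\Omega(\partial^2_{k\ell}V)^2\,dx$, i.e.\ the constant $\tfrac{13}{360}$ should be $\tfrac{1}{120}$; the paper's proof inherits this because it omits seven of the nine tuples. Your enumeration would have caught the omission, so the proposal is methodologically sound but incomplete, and its closing sentence is premature. (The discrepancy does not affect Theorem~\ref{thm-comp}: the estimate \eqref{pf20} only uses positivity and equivalence of the $H^2$ seminorm, not the precise constants.)
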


\begin{proof}
Since
\bel{pf2b}
c_{\alpha ^2}=\frac{\mathscr{I}(\alpha _1^2,\alpha _2^2)}{\alpha^2!},\ \alpha^2=(\alpha_1^2,\alpha_2^2),
\ee
by \eqref{e11} and \eqref{ap9}, we know from the two last points in Lemma \ref{lm-I} that
\bel{pf3}
c_{\alpha ^2}=0\ \mbox{if the sum}\ (\alpha _1^2)_k+(\alpha _2^2)_k\ \mbox{is odd for any}\ k \in \{1,\ldots,n\},
\ee
and
\bel{pf4}
c_{\alpha ^2}=c_{\widetilde{\alpha} ^2}\ \mbox{for}\ \widetilde{\alpha}^2=(\alpha ^2_2,\alpha ^2_1),
\ee
We first compute $\sum_{|\alpha ^2|=4}c_{\alpha ^2}P_{\alpha ^2}(V )$. In what follows we note $(0,\ldots ,\underset{k}{\beta},\ldots ,0)$, $1 \leq k \leq n$, $\beta \in \R$, the vector $(\beta_j)_{1 \leq j \leq n} \in \R^n$ such that $\beta_j=0$ for all $1 \leq j \neq k \leq n$ and $\beta_k = \beta$. In view of \eqref{pf2b} we apply the first point in Lemma \ref{lm-I} for $\alpha ^2=((0,\ldots \underset{k}{2}, \ldots 0),(0,\ldots ,0))$, $1 \leq k \leq n$,
getting
\bel{pf5}
c_{\alpha ^2}=\int_0^1\int_0^{s_1} I_{0,0}(s_1,s_2)^{n-1} I_{2,0}(s_1,s_2) ds_1ds_2=\frac{(4\pi )^{-(n-1)/2}}{2}\int_0^1\int_0^{s_1}I_{2,0}(s_1,s_2) ds_1ds_2
=\frac{(4\pi )^{-n/2}}{12},
\ee
with the aid of Part v) in Lemma \ref{lm-int2}. Similarly, for $\alpha ^2=((0,\ldots ,\underset{k}{1},\ldots ,0),(0,\ldots ,\underset{k}{1},\ldots ,0))$, $1 \leq k \leq n$, we use the first part of Lemma \ref{lm-int2} and obtain that
\bel{pf6}
c_{\alpha ^2}=(4\pi )^{-(n-1)/2}\int_0^1\int_0^{s_1} I_{1,1}(s_1,s_2) ds_1ds_2
=\frac{(4\pi )^{-n/2}}{12}.
\ee
In light of \eqref{pf3}-\eqref{pf4} we deduce from \eqref{pf5}-\eqref{pf6} that
$$ \sum_{|\alpha ^2|=2}c_{\alpha ^2}P_{\alpha ^2}(V )= \frac{1}{6}(4\pi)^{-n/2}\int_\Omega \Delta V V dx + \frac{1}{12}(4\pi)^{-n/2}\int_\Omega |\nabla V|^2dx. $$
Taking into account that $\int_\Omega \Delta V V dx=-\int_\Omega |\nabla V|^2dx$, we obtain \eqref{pf1} from the above line.

We now compute $\sum_{|\alpha ^2|=4}c_{\alpha ^2}P_{\alpha ^2}(V )$. As a preamble we first invoke Lemma \ref{lm-int2} and get simultaneously
\bel{pf8}
I_{2,2}(s_1,s_2)=2(1-s_1)[s_1 I_{0,2}(s_1,s_2)+2s_2I_{1,1}(s_1,s_2)]
=4(4\pi )^{-1/2}(1-s_1)s_2[s_1(1-s_2)+2(1-s_1)s_2],
\ee
and
\bel{pf9}
I_{3,1}(s_1,s_2)=2(1-s_1)[2s_1 I_{1,1}(s_1,s_2)+s_2I_{2,0}(s_1,s_2)]
=12(4\pi )^{-1/2}(1-s_1)^2s_1s_2,
\ee
from Part iii), and
\bel{pf10}
I_{4,0}(s_1,s_2)=12(4\pi )^{-1/2}s_1^2(1-s_1)^2,
\ee
from Part v). Thus, for all $k \in \{1,\ldots,n\}$ it follows from fhe first part of Lemma \ref{lm-I} and \eqref{pf10} upon taking $\alpha ^2=((0,\ldots ,\underset{k}{4},\ldots ,0),(0,\ldots ,0))$ in \eqref{pf2b} that 
\bel{pf11}
c_{\alpha ^2}=\frac{(4\pi)^{-(n-1)/2}}{4!}\int_0^1\int_0^{s_1}I_{4,0}(s_1,s_2)ds_1ds_2
= \frac{1}{120}(4\pi)^{-n/2}.
\ee
Further, choosing $\alpha ^2=((0,\ldots ,\underset{k}{3},\ldots ,0),(0,\ldots ,\underset{k}{1},\ldots ,0))$ we deduce in the same way from \eqref{pf9} that,
\bel{pf12}
c_{\alpha ^2}=\frac{(4\pi)^{-(n-1)/2}}{3!}\int_0^1\int_0^{s_1}I_{3,1}(s_1,s_2)ds_1ds_2=\frac{1}{60}(4\pi)^{-n/2},
\ee
and with $\alpha ^2=((0,\ldots ,\underset{k}{2},\ldots ,0),(0,\ldots ,\underset{k}{2},\ldots ,0))$, we get from \eqref{pf8} that
\bel{pf13}
c_{\alpha ^2}= \frac{(4\pi)^{-(n-1)/2}}{2!2!}\int_0^1\int_0^{s_1}I_{2,2}(s_1,s_2)ds_1ds_2
= \frac{1}{40}(4\pi)^{-n/2}.
\ee
Finally, upon taking $\alpha^2=((0,\ldots ,\underset{k}{2},\ldots ,0),(0,\ldots ,\underset{\ell }{2},\ldots ,0))$ in \eqref{pf2b}, for $1 \leq k \neq \ell \leq n$, we derive from the two last parts of Lemma \ref{lm-int2} that
\bel{pf14}
c_{\alpha ^2}= \frac{(4\pi)^{-(n-2)/2}}{2!2!}\int_0^1\int_0^{s_1}I_{2,0}(s_1,s_2) I_{0,2}(s_1,s_2)ds_1ds_2
= \frac{1}{72}(4\pi)^{-n/2},
\ee
while the choice $\alpha ^2=((0,\ldots ,\underset{k}{1},\ldots ,\underset{\ell}{1},\ldots 0),(0,\ldots ,\underset{k}{1},\ldots ,\underset{\ell }{1},\ldots ,0))$ leads to
\bel{pf15}
c_{\alpha ^2}= (4\pi)^{-(n-2)/2}\int_0^1\int_0^{s_1}I_{1,1}(s_1,s_2)^2ds_1ds_2
= \frac{1}{45}(4\pi)^{-n/2},
\ee
with the aid of the first part. Putting \eqref{pf11}--\eqref{pf15} together and recalling \eqref{pf3}-\eqref{pf4} we end up getting \eqref{pf2}.
\end{proof}
Armed with Proposition \ref{pr-id} we are now in position to prove Theorem \ref{thm-comp}.

\subsection{Completion of the proof}
\label{sec-coe}
By applying the reproducing property \eqref{a5} to the kernel $G$, defined in \eqref{a13}, we derive from \eqref{e11} for all $j \geq 1$ that
\bel{pf16}
c_{\alpha^j=0}= \int_{(\mathbb{R}^n)^n} \int _0^1\int_0^{s_1}\ldots \int_0^{s_{j-1}}G(1-s_1,w_1) \prod_{k=1}^j G(s_k-s_{k+1},w_k-w_{k+1}) dw^j ds^j=\frac{(4\pi )^{-n/2}}{j!},
\ee
where $s_{j+1}=w_{j+1}=0$.
In light of \eqref{e12}-\eqref{e12b}, \eqref{pf16} then yields that
\bel{pf17}
\mathcal{P}_2(V)=-c_{\alpha^1=0} P_{\alpha^1=0}(V)=-(4\pi)^{-n/2} \int_\Omega Vdx.
\ee
Next, bearing in mind that the potential $V$ is compactly supported in $\Omega$, we notice from \eqref{e12} that
\bel{pf17b}
P_{\alpha^1}(V) = \int_{\Omega} \partial^{\alpha^1} V(x) dx = 0,\ | \alpha^1 | \geq 1.
\ee
As a consequence we have
\bel{pf18}
\mathcal{P}_4(V)=c_{\alpha^2=0} P_{\alpha^2=0}(V)-\sum_{| \alpha^1 | =2} c_{\alpha^1} P_{\alpha^1}(V)=\frac{(4\pi)^{-n/2}}{2}\int_\Omega V(x)^2 dx.
\ee
Further, as
$\mathcal{P}_6=-c_{\alpha^3=0} P_{\alpha^3=0}(V)+\sum_{| \alpha^2 | =2} c_{\alpha^2} P_{\alpha^2}(V)-\sum_{| \alpha^1 | =4} c_{\alpha^1} P_{\alpha^1}(V)$,
it follows from \eqref{pf1} and \eqref{pf16} that
\bel{pf19}
\mathcal{P}_6(V)=-\frac{(4\pi)^{-n/2}}{6}\left(\frac{1}{2}\int_\Omega |\nabla V(x) |^2dx+\int_\Omega V(x)^2 dx\right).
\ee
Finally, since $\int_\Omega \partial_{k m}^2V (x) V(x)^2dx=-2\int_\Omega \partial_k V(x) \partial_m V(x) V(x) dx$ for all natural numbers $1 \leq k, m \leq n$, by integrating by parts, we see that there is a constant $C_n$ depending only on $n$ such that we have
$$
\left| \sum_{|\alpha^3|=2} c_{\alpha ^3} P_{\alpha^3}(V) \right| \leq C_n \|V\|_\infty \int_\Omega |\nabla V(x)|^2dx,
$$
according to \eqref{e12}. This, together with the identity
$$
\mathcal{P}_{8} (V)=c_{\alpha ^4 =0}P_{\alpha ^4 =0}(V) - \sum_{|\alpha ^3|=2}c_{\alpha ^3}P_{\alpha ^3}(V) +\sum_{|\alpha ^2|=4}c_{\alpha ^2}P_{\alpha ^2}(V )-\sum_{|\alpha ^1|=6}c_{\alpha ^1}P_{\alpha^1}(V),
$$
arising from \eqref{e12b}, and \eqref{pf2}, \eqref{pf16}, \eqref{pf17b}, then yield
\bel{pf20}
\sum_{|\gamma |=2}\int_\Omega |\partial^\gamma V(x)|^2dx+\int_\Omega V(x)^4dx\leq C_n'\left(|\mathcal{P}_8(V)|+\|V\|_\infty \int_\Omega |\nabla V(x)|^2dx.\right),
\ee
for some constant $C_n'>0$ depending only on $n$. In light of \eqref{pf19}-\eqref{pf20} the set $\mbox{Is}(V_0)\cap \mathcal{B}$ is thus bounded in $H^2(\Omega )$ from
Corollary \ref{cor-asymptotic}. This entails the desired result since $H^2(\Omega )$ is compactly embedded in $H^s(\Omega )$ for all $s<2$.


\bigskip


\end{document}